\documentclass{article}
\usepackage[
  left=2.5cm,
  right=2.5cm,
  top=3cm,
  bottom=3cm
]{geometry}
\usepackage[utf8]{inputenc}
\usepackage{amsmath}
\usepackage{amssymb}
\usepackage{amsthm}
\usepackage{mathtools}
\usepackage{hyperref}
\usepackage{mathrsfs}
\usepackage{bbm}
\usepackage{bm}
\usepackage{comment}
\usepackage{enumerate}
\usepackage{float}
\usepackage{subcaption}
\usepackage{enumitem}
\usepackage{authblk}

\usepackage[capitalize,nameinlink]{cleveref}
\usepackage{graphicx}

\usepackage[
  backend=biber,
  bibstyle=ieee,
  citestyle=numeric,
  isbn=true,
  doi=true,
  sorting=none,
  url=true,
  bibencoding=utf8,
  maxnames=2,
  debug=true
]{biblatex}
\usepackage{indentfirst}

\DeclareMathOperator{\sign}{sign}
\DeclarePairedDelimiter\abs{\lvert}{\rvert}%

\DeclarePairedDelimiter\norm{\lVert}{\rVert}%
\DeclarePairedDelimiter\ceil{\lceil}{\rceil}
\DeclarePairedDelimiter\floor{\lfloor}{\rfloor}

\makeatletter
\let\oldabs\abs
\def\abs{\@ifstar{\oldabs}{\oldabs*}}
\let\oldnorm\norm
\def\norm{\@ifstar{\oldnorm}{\oldnorm*}}

\let\oldfloor\floor
\def\floor{\@ifstar{\oldfloor}{\oldfloor*}}

\let\oldceil\ceil
\def\ceil{\@ifstar{\oldceil}{\oldceil*}}
\makeatother

\newtheorem{theorem}{Theorem}[section]

\newtheorem{lemma}[theorem]{Lemma}

\newtheorem{remark}[theorem]{Remark}

\newcommand{\authcite}[1]{\citeauthor{#1}~\cite{#1}}

\newcommand*{\R}{\mathbb{R}}
\newcommand*{\N}{\mathbb{N}}
\newcommand*{\Z}{\mathbb{Z}}

\newcommand*{\E}{\boldsymbol{\mathcal{E}}}
\newcommand*{\uu}{\bold{u}}
\newcommand*{\vv}{\bold{v}}

\newcommand*{\kk}{\bold{k}}
\newcommand*{\pp}{\bold{p}}
\newcommand*{\qq}{\bold{q}}
\newcommand*{\mm}{\bold{m}}

\newcommand*{\ww}{\bold{w}}
\newcommand*{\jj}{\bold{j}}
\newcommand*{\ii}{\bold{i}}
\newcommand*{\hh}{\bold{h}}
\newcommand*{\xx}{\bold{x}}
\newcommand*{\yy}{\bold{y}}
\renewcommand*{\AA}{\bold{A}}

\newcommand*{\LL}{\boldsymbol{\mathcal{L}}}
\newcommand*{\VV}{\bold{V}}
\newcommand*{\WW}{\bold{W}}
\newcommand*{\CC}{\bold{C}}
\newcommand*{\MM}{\bold{M}}
\newcommand*{\NN}{\bold{N}}
\newcommand*{\PP}{\bold{P}}
\newcommand*{\RR}{\bold{R}}
\newcommand*{\KK}{\bold{K}}
\newcommand*{\EE}{\bold{E}}
\renewcommand*{\SS}{\bold{S}}
\newcommand*{\ff}{\bold{f}}
\newcommand*{\bb}{\bold{b}}

\renewcommand*{\ff}{\bold{f}}
\newcommand*{\xixi}{\boldsymbol{\xi}}
\newcommand*{\GammaGamma}{\boldsymbol{\Gamma}}
\renewcommand*{\Re}{\mathrm{Re}}

\newcommand*{\LandO}{\mathcal{O}}

\newcommand\restr[2]{{
  \left.\kern-\nulldelimiterspace 
  #1 
  \vphantom{\big|} 
  \right|_{#2} 
  }}

\title{Parametric Spectral Submanifolds across Hopf Bifurcations with  Applications to Fluid Dynamics \\
\vspace{10pt}
\large (Submitted to Chaos) }

\author[1]{James King}
\author[1,2]{Bálint Kaszás}
\author[1]{Gergely Buza}
\author[3]{William Jussiau}
\author[1]{George Haller}

\affil[1]{Institute for Mechanical Systems, ETH Zurich, Zurich, Switzerland}
\affil[2]{Department of Mechanical Engineering, Stanford University, Stanford, USA}
\affil[3]{Department of Engineering Cybernetics, NTNU, Trondheim, Norway}

\date{}

\begin{document}

\maketitle

\begin{abstract}
    We investigate the persistence and regularity of spectral submanifolds (SSMs) in high-dimensional parametric dynamical systems undergoing a Hopf bifurcation.
    By analyzing how resonances in the linearized spectrum near bifurcation points limit the existence and smoothness of SSMs, a phenomenon that has been mostly overlooked, we show that low-order Taylor coefficients of the SSM expansion and the associated reduced dynamics persist smoothly through the bifurcation. 
    This analysis generalizes to any local bifurcation and provides a clear estimate of the parameter ranges over which a parametric SSM model can be justified, thus illustrating how globally the model can be extended despite the presence of resonances near criticality.
    We demonstrate these findings on multiple examples, including a data-driven SSM approach to the lid-driven cavity flow. For that problem, we construct a parametric SSM-reduced model that accurately captures the full transition to periodic dynamics and the critical Reynolds number. 
    These results provide a mathematical foundation for robust data- and equation-driven model reduction of fluid flows across bifurcations, enabling an accurate prediction of nonlinear dynamics across critical parameter regimes.
\end{abstract}
\section{Introduction}
The study of bifurcations in nonlinear dynamical systems is fundamental to understanding the emergence of complex nonlinear phenomena, such as (quasi-) periodic motion, transitions between isolated fixed points, and chaotic dynamics. Among these, the supercritical Hopf bifurcation is of particular interest, as it describes the transition from steady-state to periodic dynamics through the birth of a limit cycle - a phenomenon ubiquitous in many engineering applications.

In high-dimensional dynamical systems, such as the Navier-Stokes equations, direct analysis of the full dynamics is often intractable. This motivates model reduction approaches to low-dimensional systems that capture the essential dynamics of the full system, and still retain high predictive accuracy.
A mathematically justified approach to model reduction is to reduce the system to a low-dimensional invariant manifold. 
In the context of bifurcations of fixed points, center manifold reduction has traditionally served as the standard approach for analyzing local bifurcations. By the center manifold theorem (see, e.g., \authcite{guckenheimerNonlinearOscillationsDynamical1984}), there exists a (generally non-unique) invariant extended center manifold tangent to the center subspace of the bifurcating fixed point. While accurate and successful, center manifold reductions are local by nature and only hold for parameter values on an open neighborhood around the bifurcation value. Moreover, precise bounds for such a neighbourhood are often difficult to find, especially if knowledge about the full system is limited.

A more general notion is that of a Spectral Submanifold (SSM), which includes any invariant manifold that is anchored to a fixed point, and is tangent to a spectral subspace of the dynamical system's linearization at that fixed point. Under nonresonance conditions on the spectrum of the linearization, there exists a unique smoothest SSM (called the primary SSM), that acts as the nonlinear equivalent of the spectral subspace itself. Built on related early results of \authcite{cabreParameterizationMethodInvariant2003} for autonomous mappings on Banach spaces and \authcite{haroParameterizationMethodComputation2006} for quasi-periodic mappings, SSM theory has been extended to spectral subspaces of mixed stability type, nonautonomous systems with small or slow forcing, stochastic systems, and nonsmooth dynamics (see \authcite{hallerModelingNonlinearDynamics2025} for a recent review). Based on these results, SSM-reduced models have been constructed and validated in the equation-driven setting (see, e.g., \authcite{ponsioenAutomatedComputationAutonomous2018} and \authcite{jainHowComputeInvariant2022})
and in the data-driven setting (see, e.g., \authcite{cenedeseDatadrivenModelingPrediction2022} and \authcite{axasFastDatadrivenModel2023}).
Specifically in the field of fluid dynamics, they have been used to model various systems including vortex shedding (\authcite{cenedeseDatadrivenModelingPrediction2022}), fluid sloshing experiments (\authcite{cenedeseDatadrivenModelingPrediction2022}), fluid-structure interaction experiments (\authcite{xuDatadrivenModellingRegular2024}), and shear flows (\authcite{kaszasCapturingEdgeChaos2024},\cite{kaszasJointReducedModel2025}). The existence of SSMs has also been shown in the Navier-Stokes equations themselves by \authcite{buzaSpectralSubmanifoldsNavier2024}.
Furthermore, since SSMs depend smoothly on parameters (see \authcite{cabreParameterizationMethodInvariant2003a}), parametric SSM-based models by \authcite{liParametricModelReduction2024}, \authcite{morsyModelReductionSystems2025}, and \authcite{abbascianoDataDrivenModelingGlobal2026} have revealed how SSMs and their reduced dynamics evolve as system parameters vary. For a general overview, we refer to \authcite{hallerModelingNonlinearDynamics2025}.

Local bifurcations in the anchor points of SSMs, however, pose significant challenges for parametric SSM-based reduction, as the nonresonance conditions required for the existence and smoothness of SSMs are guaranteed to fail near bifurcation, assuming the presence of at least one real negative eigenvalue. 
Therefore, a thorough understanding of the persistence and regularity of SSM expansions across such bifurcations is essential for constructing reliable reduced-order models that capture the bifurcation.

In this work, we analyze the behaviour of SSMs in the vicinity of a Hopf bifurcation of its anchor point. We show a generic accumulation of resonances near the bifurcation. We nevertheless obtain the persistence of low-order coefficients in the Taylor expansion of the manifold and the associated reduced dynamics. 
Importantly, these considerations are not limited to Hopf bifurcations - they generalize to any type of bifurcation.
After illustrating our theoretical findings on the canonical Hopf normal form, we perform a similar but data-driven analysis of the lid-driven cavity flow, a classical benchmark in fluid dynamics that exhibits a supercritical Hopf bifurcation as the Reynolds number is increased.

\subsection{Prior Work}

Center manifold reduction has successfully been used to capture Hopf bifurcations in the Navier-Stokes equations. By introducing a parameter depending on the Reynolds number, \authcite{cariniCentremanifoldReductionBifurcating2015} perform a center manifold reduction in the flow past a circular cylinder, which exhibits a supercritical Hopf bifurcation. They use the same method to capture a codimension-two bifurcation of the flow past two side-by-side cylinders and report accurate low-dimensional, reduced dyanamics conjugate to those on the center manifold.

Parameter-dependent center and unstable manifolds are calculated by \authcite{liParametricModelReduction2024} to model the post-Hopf bifurcation dynamics of a cantilevered pipe conveying fluid. To deal with parameter dependence, \authcite{liParametricModelReduction2024} also solve the extended invariance equations satisfied by the parametric invariant manifolds, expanded around different parameter values. In a similar fashion, \authcite{colomboReducedOrderModelling2025} solve the extended invariance equations for the flow past a circular cylinder, expanded around different Reynolds numbers. They also make predictions for a small range of pre-bifurcation parameter values.

In general, such an expansion in parameters is an inherently more local approach than the coefficient interpolation we will perform in this work. Moreover, as we will show, the parameter ranges in which the expansions in \authcite{cariniCentremanifoldReductionBifurcating2015} and \authcite{colomboReducedOrderModelling2025} are valid are limited precisely by resonances within the spectrum of the linearization. 
This explains why \authcite{colomboReducedOrderModelling2025} were not able to calculate expansions centered around pre-bifurcation parameter values that were close to criticality, and why their expansions centered around post-bifurcation parameters become inaccurate pre-bifurcation.

Interpolating SSM coefficients in order to create a data-driven parametric SSM model across bifurcations has already been carried out by \authcite{abbascianoDataDrivenModelingGlobal2026}. In that case however, the anchor point of the SSM remains hyperbolic while local and global bifurcations take place within the SSM away from its anchor point. As a consequence, the nonresonance conditions remain satisfied across the parameter range studied.

As an alternative to interpolation over multiple models obtained at different parameter values, \authcite{morsyModelReductionSystems2025} construct an equation-driven parametric SSM model. Their procedure only requires a single, parameter-dependent calculation, that does not have the locality limitations of parameter-based expansions. This is achieved by projecting the parameter dependence onto a suitable orthogonal polynomial basis, which \authcite{morsyModelReductionSystems2025} then use for uncertainty quantification.

\authcite{delallaveGlobalPersistenceLyapunov2019} prove the parametric persistence of Ljapunov Subcenter Manifolds (LSMs) as SSMs under dissipative perturbations. By taking advantage of the conserved quantity of the unperturbed Hamiltonian system, they are able to show the existence and uniqueness of an SSM on a domain independent of the perturbation parameter, which, as they point out, is not generally possible in the setting of this work. Because \authcite{delallaveGlobalPersistenceLyapunov2019} assume there to be no real eigenvalues in the spectrum of the linearization, the resonances encountered in this work are not present there.

\section{Spectral Submanifolds (SSMs)}
\label{sec:SSM_Theory}

Consider the autonomous parametric nonlinear dynamical system
\begin{align}
    \dot{\xx} = \ff(\xx;\mu), \qquad \xx \in U \subset \R^N, \qquad \mu \in V \subset \R,
    \label{eq:parametric_dynamical_system}
\end{align}
where $U$ and $V$ are open, connected sets and $\ff\in C^{r+1}(U\times V,\R^N)$ for some $r\geq 1$. Assume that for all $\mu \in V$, system \eqref{eq:parametric_dynamical_system} has a fixed point $\xx_0(\mu)$ that depends $C^r$ smoothly on $\mu$. This lets us write \eqref{eq:parametric_dynamical_system} as
\begin{align}
    \dot{\xx} = \AA(\mu)\xx + \ff_0(\xx;\mu), \qquad \AA(\mu)=D_{\xx}\ff(\xx_0(\mu),\mu), \qquad \ff_0(\xx;\mu)=\LandO(\norm{\xx}^2),
    \label{eq:system_around_fixed_point}
\end{align}
where $\AA \in C^{r}(V,\R^{N\times N})$ and $\ff_0 \in C^{r}(U\times V,\R^N)$.
The existence and smoothness of a unique primary SSM, $W(E;\mu)$, tangent to a $d$-dimensional spectral subspace $E(\mu)$ (i.e. the direct sum of some real generalized eigenspaces of $\AA(\mu)$) is shown by \authcite{cabreParameterizationMethodInvariant2003} under the assumptions that
\begin{enumerate}[label=(A\arabic*)]
    \item $\AA(\mu)$ has a uniformly stable spectrum \label{assump:uniformly_stable_spectrum}
    \begin{align}
        \Re\lambda_N(\mu) \leq \dots \leq \Re\lambda_1(\mu) < 0, \qquad \forall \mu \in V.
        \label{eq:uniformly_stable_spectrum}
    \end{align}
    \item The spectral quotient $Q(\mu)$ does not exceed the smoothness of the nonlinearities \label{assump:spectral_quotient}
    \begin{align}
        r\geq Q(\mu)= \left\lfloor \frac{\Re\lambda_{N}(\mu)}{\Re\lambda_{1}(\mu)} \right\rfloor + 1, \qquad \forall \mu \in V.
        \label{eq:spectral_quotient}
    \end{align}
    \item The spectrum of $\AA(\mu)$ is uniformly nonresonant \label{assump:nonresonance_conditions}
    \begin{align}
        \lambda_l(\mu) \neq \sum_{j=1}^d m_j \lambda_{j}(\mu), \ m_j \in \N, \ 2\leq \sum_{j=1}^d m_j \leq Q(\mu), \ \lambda_j(\mu) \in \text{spect}[\AA(\mu)]|_{E(\mu)}, \ \lambda_l(\mu) \notin \text{spect}[\AA(\mu)]|_{E(\mu)}.
        \label{eq:nonresonance_conditions}
    \end{align}
\end{enumerate}
Under assumptions \ref{assump:uniformly_stable_spectrum}-\ref{assump:nonresonance_conditions}, the SSM, $W(E;\mu)$, and the dynamics within $W(E;\mu)$ are both $C^{r-Q-1}$ smooth in $(\xx, \mu)$, where $Q=\sup_{\mu \in V} Q(\mu)$. In the case where $\ff$ is $C^\infty$ or analytic, the SSM and SSM-reduced dynamics are also $C^\infty$ or analytic, respectively.

If the nonresonance conditions \eqref{eq:nonresonance_conditions} are violated for some multi index $\mm \in \N^d$, then we call $\abs{\mm} = \sum_{j=1}^d m_j$ the order of that resonance.

\clearpage
\section{Continuing SSMs through a Hopf bifurcation}
Consider the dynamical system \eqref{eq:parametric_dynamical_system} with $N\geq 3$
and assume that the system undergoes a Hopf bifurcation at $\mu = \mu_0$.
Specifically, assume that
\begin{enumerate}[resume*]
    \item $\AA(\mu)$ has a complex conjugate eigenvalue pair crossing the imaginary axis at $\mu_0$
\begin{align*}
\lambda_{1,2}(\mu) = \alpha(\mu)\pm i \omega(\mu), \qquad \omega(\mu) \neq 0, \alpha(\mu_0) = 0, \qquad \alpha(\mu) < 0, \ \forall \mu < \mu_0, \qquad \alpha(\mu) > 0, \ \forall \mu > \mu_0,
\end{align*}
and the rest of the spectrum has negative real part \label{assump:bifurcating_eigenvalue_pair}
\begin{align}
    \Re \lambda_N(\mu) \leq \dots \leq \Re \lambda_{3}(\mu) < 0, \qquad \forall \mu \in V.
    \label{eq:ordering_eigenvalues}
\end{align}
\end{enumerate}
The linearization of \eqref{eq:parametric_dynamical_system} around $\xx_0(\mu)$,
\begin{align}
    \dot{\xixi} = \AA(\mu)\xixi,
    \label{eq:linearised_system}
\end{align}
has a class $C^{r}$ family of 2D real eigenspaces, $E(\mu)$, corresponding to $\lambda_{1,2}(\mu)$ that we can align using a $\mu$-dependent coordinate change. 
We denote this family by
\begin{align}
    E^{\text{slow}} = \left(\bigcup_{\mu < \mu_0} E^s(\mu)\right) \cup E^c(\mu_0) \cup \left(\bigcup_{\mu > \mu_0} E^u(\mu)\right),
    \label{eq:family_eigenspaces}
\end{align}
where $E^s, E^c, E^u$ denote the slow spectral subspace, the center subspace and the unstable subspace, respectively.
We now turn to constructing invariant manifolds of \eqref{eq:parametric_dynamical_system} that are tangent to these eigenspaces.
\subsection{Invariant manifolds near a Hopf Bifurcation}
By the center manifold theorem, any fixed point $\xx_0(\mu)$ with $\mu>\mu_0$ has a unique 2D, class $C^{r}$, unstable manifold $W^u(\mu)$ that is tangent to $E^u(\mu)$ at $\xx_0(\mu)$. This family of unstable manifolds is also jointly $C^{r}$ in $(\xx, \mu)$.

By the Hopf-Bogdanov theorem (see, e.g. \authcite{guckenheimerNonlinearOscillationsDynamical1984}), the extended system
\begin{align}
    \dot{\xx} &= \ff(\xx;\mu), \nonumber \\
    \dot{\mu} &= 0,
    \label{eq:extended_system}
\end{align}
has a 3D, class $C^{r}$, unique extended center manifold $\hat{W}^c(\mu_0)$ at $(\xx_0(\mu_0),\mu_0)$, tangent to $E^c(\mu_0)\times \R$ and defined on an open neighborhood $U_c$ of $(\xx_0(\mu_0),\mu_0)$. 
The manifold $\hat{W}^c(\mu_0)$ contains all bounded solutions of \eqref{eq:extended_system} on $U_c$, which includes $\xx_0(\mu)$ for $\mu>\mu_0$ (\authcite{vanderbauwhedeCentreManifoldsNormal1989}). Therefore,
\begin{align*}
    \forall \mu_1 > \mu_0: \ \hat{W}^c(\mu_0) \cap \{(\xx,\mu) \colon \mu = \mu_1\} = W^u(\mu_1),
\end{align*}
meaning that we can smoothly continue the center manifold into the unstable manifold locally by varying $\mu$.

Based on Section \ref{sec:SSM_Theory}, we can conclude a similar statement for $\mu<\mu_0$ on the existence of a 2D SSM, whenever the nonresonance conditions \eqref{eq:nonresonance_conditions} hold. However, as we will show in the next section, as soon as $\text{spect}[\AA(\mu)]$ includes a purely real negative eigenvalue outside $E(\mu)$, the nonresonance conditions \eqref{eq:nonresonance_conditions} will fail arbitrarily close to the bifurcation.

\subsubsection{Accumulation of resonances near a Hopf bifurcation}
\label{sec:accumulation_of_resonances_near_Hopf}
Consider again the dynamical system \eqref{eq:parametric_dynamical_system} with $N\geq 3$ undergoing a Hopf bifurcation at $\mu = \mu_0$. Assume further that
\setcounter{enumi}{4}
\begin{enumerate}[resume*]
    \item $\AA(\mu)$ has a purely real negative eigenvalue
    \begin{align*}
        \lambda_{3}(\mu) = \nu(\mu) < 0, \qquad \lambda_{3}(\mu) < \Re \lambda_{1,2}(\mu), \qquad \forall \mu \leq \mu_0.
    \end{align*}
    \item The eigenvalues $\lambda_{1,2,3}(\mu)$ are simple and disjoint from the rest of the spectrum for all $\mu$. \label{assump:simple_disjoint_eigvals}
\end{enumerate}
We want to asses the existence of SSMs over spectral subspaces $E(\mu)$ with
\begin{align*}
    \lambda_{1,2}(\mu) \in \text{spect}[\AA(\mu)]|_{E(\mu)}, \ \lambda_3(\mu) \notin \text{spect}[\AA(\mu)]|_{E(\mu)}. 
\end{align*}
Since $\overline{\lambda_1}(\mu)=\lambda_2(\mu)$, we have
\begin{align*}
    m_1\lambda_1(\mu)+m_2\lambda_2(\mu) \in \R \Leftrightarrow m_1=m_2.
\end{align*}
As a consequence, the nonresonance conditions
\begin{align*}
\lambda_3(\mu) \neq m_1\lambda_1(\mu)+m_2\lambda_2(\mu),
\qquad 2\leq m_1+m_2 \leq \left\lfloor \frac{\Re\lambda_{3}(\mu)}{\Re\lambda_{1,2}(\mu)} \right\rfloor + 1,
\end{align*}
reduce to
\begin{align}
\nu(\mu)\neq 2m\alpha(\mu), \qquad 2\leq 2m \leq \left\lfloor \frac{\nu(\mu)}{\alpha(\mu)} \right\rfloor + 1.
\label{eq:nonresonance_conditions_3D}
\end{align}
The following two lemmas show that under these very general assumptions, the nonresonance conditions \eqref{eq:nonresonance_conditions} are guaranteed to fail arbitrarily close to a Hopf bifurcation.

\begin{lemma}[Failure of uniform nonresonance near Hopf bifurcation]
\label{lem:failure_uniform_nonresonance}

Under the assumptions \ref{assump:bifurcating_eigenvalue_pair}-\ref{assump:simple_disjoint_eigvals}, the nonresonance conditions \eqref{eq:nonresonance_conditions_3D}
cannot hold uniformly on any open neighborhood of $\mu_0$.
\end{lemma}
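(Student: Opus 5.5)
The plan is an intermediate value argument applied to the continuous ratio $\rho(\mu)=\nu(\mu)/\alpha(\mu)$ on the pre-bifurcation side $\mu<\mu_0$. On this side the nonresonance conditions \eqref{eq:nonresonance_conditions_3D} are the relevant ones, since there $E(\mu)=E^s(\mu)$ and assumption \ref{assump:uniformly_stable_spectrum} holds locally.

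\textbf{Step 1: behaviour of $\rho$.} By \ref{assump:bifurcating_eigenvalue_pair}, $\alpha$ is continuous (indeed $C^r$, since the eigenvalues are simple by \ref{assump:simple_disjoint_eigvals}), satisfies $\alpha(\mu)<0$ for $\mu<\mu_0$, and has $\alpha(\mu_0)=0$. By the assumption on $\lambda_3$ and simplicity, $\nu$ is continuous with $\nu(\mu_0)<0$. Hence on any interval $(\mu_0-\delta,\mu_0)$ the ratio $\rho(\mu)$ is continuous and positive. Moreover $\rho(\mu)\to+\infty$ as $\mu\to\mu_0^-$, because the numerator tends to the negative constant $\nu(\mu_0)$ while the denominator tends to $0$ from below.

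\textbf{Step 2: resonances are hit.} Fix an arbitrary open neighborhood of $\mu_0$; it contains some interval $(\mu_0-\delta,\mu_0)$. Pick any $\mu_1$ in this interval. Since $\rho$ is continuous and unbounded above on $[\mu_1,\mu_0)$, the intermediate value theorem gives, for every even integer $2m\geq\rho(\mu_1)$, a point $\mu_m\in[\mu_1,\mu_0)$ with $\rho(\mu_m)=2m$. Multiplying through by $\alpha(\mu_m)$ yields $\nu(\mu_m)=2m\alpha(\mu_m)$, that is, $\lambda_3=m\lambda_1+m\lambda_2$ at $\mu=\mu_m$.

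\textbf{Step 3: the resonance is within the admissible order range.} We must check that this equality actually violates \eqref{eq:nonresonance_conditions_3D}, i.e.\ that $2\leq 2m\leq\lfloor\rho(\mu_m)\rfloor+1$. Since $\rho(\mu_m)=2m$ is an integer, $\lfloor\rho(\mu_m)\rfloor+1=2m+1\geq 2m$. The lower bound $2m\geq 2$ holds because $\rho>1$ near $\mu_0$; this follows from $\lambda_3<\Re\lambda_{1,2}$, i.e.\ $\nu<\alpha<0$. Thus each $\mu_m$ is a genuine resonance of order $2m$.

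Letting $m\to\infty$ gives infinitely many such $\mu_m$, accumulating at $\mu_0$ from the left. This shows that the conditions fail inside every neighborhood of $\mu_0$, so uniform nonresonance is impossible.

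The main obstacle is bookkeeping rather than analysis: we must make sure the resonance found lies inside the admissible range cut off by the spectral quotient $Q(\mu)$. The integer-valued choice $\rho(\mu_m)=2m$ is precisely what makes the upper bound automatically satisfied. It is also worth remarking, for the reader, that on the $\mu\geq\mu_0$ side the quotient is undefined or infinite, so restricting the argument to $\mu<\mu_0$ loses nothing.
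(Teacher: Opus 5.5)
Your proof is correct and uses the same idea as the paper's: the ratio $\nu(\mu)/\alpha(\mu)$ is continuous for $\mu<\mu_0$ and diverges to $+\infty$ as $\mu\to\mu_0^-$, so it cannot avoid the even integers in any neighborhood of $\mu_0$. The paper argues by contradiction (uniform nonresonance would trap the ratio between fixed bounds by continuity), while your direct intermediate-value argument is slightly cleaner: it stays on the side $\mu<\mu_0$ where the ratio is defined, and it checks explicitly that the resonance $2m=\nu/\alpha$ lies within the admissible order range.
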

\begin{proof}
See Appendix \ref{sec:proof_lemma_failure_uniform_nonresonance}.
\end{proof}
The assumption \ref{assump:simple_disjoint_eigvals} implies that the eigenvalues $\lambda_{1,2,3}(\mu)$ depend $C^r$ smoothly on $\mu$ (\authcite{sibuyaGlobalPropertiesMatrices1965}). This means that we can write the expansions $\alpha(\mu)=a(\mu_0-\mu)^p+o(|\mu_0-\mu|^p)$ for some constant $a<0$, some integer $1\leq p \leq r$ and
$\nu(\mu)=\nu_0+b(\mu_0-\mu)+o(|\mu_0-\mu|)$ for some constant $\nu_0<0$.
\begin{lemma}[Asymptotic localization of resonances near a Hopf bifurcation]
\label{lem:resonance_locations}

Under the assumptions \ref{assump:bifurcating_eigenvalue_pair}-\ref{assump:simple_disjoint_eigvals}, assume in addition that
$r>1$.
Then there exists a sequence of parameters $\{\mu_{2m}, m\in\mathbb{N}\}$ with
$\mu_{2m}<\mu_0$ and $\mu_{2m}\to\mu_0$ as $m\to\infty$ such that the
nonresonance condition \eqref{eq:nonresonance_conditions_3D}
fails at $\mu=\mu_{2m}$.
Moreover, the resonance locations satisfy the asymptotic expansion
\begin{align*}
    \mu_{2m}=\mu_0-\left(\frac{\nu_0}{2am}\right)^{1/p}+o(m^{-1/p}),
    \qquad m\to\infty.
\end{align*}
\end{lemma}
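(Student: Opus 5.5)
We study the ratio $g(\mu) = \nu(\mu)/(2\alpha(\mu))$ on a left neighborhood $(\mu_0-\delta,\mu_0)$. A resonance of order $2m$ in \eqref{eq:nonresonance_conditions_3D} occurs exactly when $g(\mu)=m$. We must also check that $2m$ lies in the admissible range $2\le 2m\le \lfloor \nu/\alpha\rfloor+1$. This is automatic, because $g(\mu)=m$ gives $\nu/\alpha = 2m$, hence $\lfloor\nu/\alpha\rfloor+1 = 2m+1 \ge 2m$. So the plan has two parts. First, show that $g$ takes every sufficiently large integer value at some $\mu_{2m}<\mu_0$, with $\mu_{2m}\to\mu_0$. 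Second, invert the asymptotics of $g$ to locate these points.

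\textbf{Existence.} By \ref{assump:bifurcating_eigenvalue_pair} and \ref{assump:simple_disjoint_eigvals}, together with the cited result of Sibuya, $\alpha$ and $\nu$ are $C^r$ (in particular continuous). Moreover $\alpha<0$ on $\mu<\mu_0$ and $\nu(\mu)\to\nu_0<0$. Hence $g$ is continuous and positive on $(\mu_0-\delta,\mu_0)$, and $g(\mu)\to+\infty$ as $\mu\uparrow\mu_0$, since $\alpha(\mu)\to 0^-$. Fix $\mu_* \in(\mu_0-\delta,\mu_0)$. By the intermediate value theorem, every integer $m> g(\mu_*)$ is attained at some $\mu\in(\mu_*,\mu_0)$. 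Choose $\mu_{2m}$ to be, say, the largest such $\mu$. Since $g$ is bounded on every compact $[\mu_*,\mu_0-\eta]$, the points with $g=m$ must leave every such interval as $m\to\infty$, so $\mu_{2m}\to\mu_0$. The finitely many small $m$ can be handled separately or the sequence reindexed; only the tail matters.

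\textbf{Asymptotics.} Set $s=\mu_0-\mu>0$. From the expansions preceding the lemma,
\begin{align*}
g = \frac{\nu_0 + O(s)}{2a s^p(1+o(1))} = \frac{\nu_0}{2a s^{p}}\bigl(1+o(1)\bigr).
\end{align*}
Setting $g=m$ gives $s^p = \frac{\nu_0}{2am}(1+o(1))$. Taking $p$-th roots, $s=\left(\frac{\nu_0}{2am}\right)^{1/p}(1+o(1))^{1/p} = \left(\frac{\nu_0}{2am}\right)^{1/p}+o(m^{-1/p})$. Note that $\nu_0/(2a)>0$, so the root is real and positive. This holds for any choice of root $\mu_{2m}$, because the estimate uses only $g(\mu_{2m})=m$ and $\mu_{2m}\to\mu_0$.

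\textbf{Main obstacle.} The delicate step is justifying the expansion $\alpha(\mu)=a(\mu_0-\mu)^p+o(|\mu_0-\mu|^p)$ with $a\neq0$ and $p\le r$. This requires that $\alpha$ does not vanish to order $r$ at $\mu_0$. The paper takes this as given, and the hypothesis $r>1$ is presumably what ensures $\nu$ admits the stated first-order expansion with $o(s)$ remainder. I would state explicitly that the lemma assumes the finite-order vanishing of $\alpha$. The rest is the elementary inversion above; there the only care needed is that the $o(1)$ terms are uniform as $s\to0$, which follows directly from the little-$o$ definitions.
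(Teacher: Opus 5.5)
Your proof is correct, but it takes a different route from the paper's.

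\textbf{The paper's route.} It works with $\tilde H(\mu,m)=\nu(\mu)-2m\alpha(\mu)$ and rescales via $s=m^{1/p}(\mu_0-\mu)$, $\epsilon=m^{-1/p}$. It then shows $H(s,\epsilon)\to\nu_0-2as^p$ uniformly on compact $s$-sets. From there it runs a perturbation-of-roots argument around $s^*=(\nu_0/2a)^{1/p}$:
\begin{itemize}
\item the intermediate value theorem on a small window $I$ around $s^*$ gives existence of a root;
\item the mean value theorem gives the explicit bound $|s_\epsilon-s^*|\le C(\epsilon)/M$, where $C(\epsilon)$ is the sup-norm of the remainder on $I$;
\item the implicit function theorem gives local uniqueness if the remainders are $C^1$.
\end{itemize}

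\textbf{Your route.} You separate existence from localization. Existence comes from one global intermediate-value argument for the continuous ratio $g=\nu/(2\alpha)$, which tends to $+\infty$ as $\mu\uparrow\mu_0$. The asymptotics come from inverting $g(\mu)=\frac{\nu_0}{2a(\mu_0-\mu)^p}(1+o(1))$.

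\textbf{What each buys.} Your argument shows that \emph{every} solution of $\nu=2m\alpha$ near $\mu_0$ from the left obeys the asymptotic formula. The paper's mean-value bound only controls roots already known to lie in the rescaled window $I$. You also check explicitly that $2m$ lies in the admissible range $2\le 2m\le\lfloor\nu/\alpha\rfloor+1$, which the paper leaves implicit. The paper's route, in exchange, gives a quantitative handle on the $o(m^{-1/p})$ term through $C(\epsilon)$, and uniqueness under extra smoothness.

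\textbf{Your caveats.} Both are well taken.
\begin{itemize}
\item The conclusion can only hold for $m$ large enough, since low-order resonances need not occur anywhere in $V$. That is also all the paper's proof delivers.
\item The expansion $\alpha=a(\mu_0-\mu)^p+o(|\mu_0-\mu|^p)$ with $a\neq 0$, $p\le r$ is a genuine nondegeneracy assumption, not a consequence of smoothness as the paper suggests. A $C^r$ function can vanish to order $r$ at $\mu_0$.
\end{itemize}

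\textbf{A small correction.} Your guess about the role of $r>1$ is off. $C^1$ regularity already gives the first-order expansion of $\nu$. Your argument needs only $\nu(\mu)\to\nu_0<0$, and the paper's IVT/MVT argument needs nothing more. In the paper, extra smoothness enters only through the optional implicit-function-theorem remark on uniqueness.
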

\begin{proof}
See Appendix \ref{sec:proof_lemma_resonance_locations}.
\end{proof}
An important consequence of the localization of these resonance locations $\mu_{2m}$, is that for all $\mu>\mu_{2m}$, there exist no resonances between $\lambda_{1,2}(\mu)$ and $\lambda_{3}(\mu)$ of order $2m$ or lower. If there exist multiple purely real eigenvalues, it is sufficient to consider the one with the largest real part in order to find this lower bound. Furthermore, the arguments we have made to localize and classify resonances are not just limited to Hopf bifurcations; for any local bifurcation where a real eigenvalue crosses the imaginary axis and another negative real eigenvalue remains outside the selected spectral subspace, one generically encounters an accumulation of resonances of increasing order (this time involving all integer multiples instead of even multiples), leading to the same failure of uniform nonresonance with respect to the parameter $\mu$. Indeed, by the same argument, this phenomenon will also occur in bifurcations of higher codimension.

\subsubsection{Existence of invariant manifolds at resonance}
As established in Section \ref{sec:SSM_Theory}, away from resonances, there exists a $C^r$ smooth, unique, invariant SSM $W(\mu)$ tangent to $E^{\text{slow}}$. Due to the accumulation of resonances upon approaching bifurcation, we will however always encounter such resonances and as such have to address the existence of invariant manifolds at resonances by other means. For this, let us assume a spectral gap 
\begin{align*}
    \nu(\mu) < k\alpha(\mu),
\end{align*}
for a $k \in \N$ and a fixed $\mu$.
Because of \eqref{eq:ordering_eigenvalues}, there exists a spectral splitting for $\gamma(\mu) \in (\nu(\mu), \alpha(\mu))$, that reads
\begin{align*}
    \R^N = E_\gamma(\mu)^- \oplus E_\gamma(\mu)^+, \qquad E_\gamma(\mu)^- = \bigoplus_{\Re\lambda(\mu) < \gamma(\mu)} E_\lambda(\mu), \qquad E_\gamma(\mu)^+ = \bigoplus_{\Re\lambda(\mu) > \gamma(\mu)} E_\lambda(\mu) = E^s(\mu).
\end{align*}
Based on \authcite{irwinNewProofPseudostable1980}, \authcite{delallaveIrwinsProofPseudostable1997}, and \authcite{chenInvariantFoliationsC1Semigroups1997}, there exists a (generally non-unique) invariant pseudo-unstable manifold, $W_\gamma^+(\mu)$, tangent to $E^s(\mu)$ that is $C^k$ smooth. Importantly, this result does not assume nonresonance. In fact, at a
resonance of order $r_0$, i.e.
\begin{align*}
    \nu(\mu) = r_0\alpha(\mu). 
\end{align*}
we have the spectral gap \begin{align*}
    \nu(\mu) < (r_0-1)\alpha(\mu),
\end{align*}
meaning $W_\gamma^+(\mu)$ is $C^{r_0 -1}$ smooth. As we approach the bifurcation, the spectral gap will increase and $W_\gamma^+(\mu)$ will become smoother.

\begin{remark}
    For the proofs of Lemmas \ref{lem:failure_uniform_nonresonance} and \ref{lem:resonance_locations} we don't require the eigenvalue ordering assumption in \ref{assump:bifurcating_eigenvalue_pair} - as soon as there is a negative real eigenvalue the resonances will occur. Without assuming this however, the spectral gap discussed in this section will be smaller which leads to less smooth pseudo-invariant manifolds.
\end{remark}

\subsubsection{Summary of invariant manifolds near a Hopf Bifurcation}
\label{sec:summary_of_invariant_manifolds_near_a_hopf_bifurcation}
As pointed out in Section \ref{sec:accumulation_of_resonances_near_Hopf}, the nonresonance conditions required for the existence of SSMs will generally fail on a sequence of parameters $\{\mu_{2m}, m\in\mathbb{N}\}$ with  $\mu_{2m}<\mu_0$ that accumulates on $\mu_0$. However, as $\mu$ approaches $\mu_0$, the order of those resonances will increase. Because of this, let $\Tilde{V}_{2m}$ be an open interval with $\mu_0 \in \Tilde{V}_{2m}$ on which there exist resonances only of order higher than $2m$, i.e.
\begin{align*}
    \Tilde{V}_{2m} = \{\mu \in (\mu_-, \mu_+): m_1\lambda_1(\mu) + m_2 \lambda_2(\mu) = \lambda_l(\mu)\implies m_1 + m_2 > 2m\}
\end{align*}
Then, according to Lemma \ref{lem:resonance_locations}, we can choose 
\begin{align}
    \Tilde{V}_{2m} = (\mu_{2m}, \mu_0+M),
    \label{eq:nonresonance_assumption}
\end{align}
for some fixed $M>0$ for which \eqref{eq:ordering_eigenvalues} continues to hold. The lowest resonance we can encounter on $\Tilde{V}_{2m}$ is of order $2m+2$. Hence, at each parameter value $\mu \in \Tilde{V}_{2m}$ we are guaranteed
\begin{itemize}
    \item a $C^r$ unique primary SSM for all $\mu < \mu_0$ and away from resonance,
    \item a $C^{2m+1}$ pseudo-unstable manifold for all $\mu < \mu_0$ at resonance,
    \item a $C^r$ center manifold at $\mu = \mu_0$ which is unique for Hopf bifurcations,
    \item a $C^r$ unique unstable manifold for all $\mu > \mu_0$.
\end{itemize}
Therefore, we can safely calculate a Taylor expansion for an invariant manifold up to order $2m+1$ at each $\mu \in \Tilde{V}_{2m}$.

\subsection{Persistence of low-order SSM coefficients through Hopf bifurcation}
\label{sec:persistence_ssm_coefficients}
In this section we show that, although the existence of class $C^r$ SSMs breaks down on a sequence of bifurcation parameter values near a Hopf bifurcation due to accumulating resonances, their low-order Taylor coefficients are uniquely computable and persist smoothly across the bifurcation.

Let $P_E(\mu)$ be the spectral projection onto the real spectral subspace $E(\mu)$ associated to $\lambda_{1,2}(\mu)$. Let $\SS_{\uu}(\mu)\in \R^{N\times 2}$ be a basis of $E(\mu)$ and $\uu$ be the reduced coordinate defined by $\SS_{\uu}(\mu)\uu = P_E(\mu) \xx$.
The Taylor expansion for the SSM, $W(E;\mu)$, up to order $2m+1$ as a graph over $E(\mu)$ reads
\begin{align}
    \xx = \WW(\uu;\mu) = \SS_{\uu}(\mu)\uu + \sum_{2\leq\abs{\kk}\leq 2m+1} \WW^{\kk}(\mu) \uu^{\kk} + o(\norm{\uu}^{2m+1}),
    \label{eq:ssm_expansion}
\end{align}
where $\kk = (k_1,k_2)$ is a multi-index, $|\kk| = k_1 + k_2$, and $\uu^\kk = u_1^{k_1} u_2^{k_2}$. 
Next, we show that these Taylor coefficients $\WW^{\kk}(\mu)$ of the SSM up to order $2m+1$ are smooth over $\mu \in \Tilde{V}_{2m}$.
\begin{lemma}[Persistence of low-order SSM coefficients]
\label{lem:low_order_ssm_coeffs}

Under the assumptions \ref{assump:bifurcating_eigenvalue_pair}-\ref{assump:simple_disjoint_eigvals}, 
the SSM coefficients $\WW^{\kk}(\mu)$ are uniquely computable
for all multiindices $|\kk|\le 2m+1$ and all $\mu\in\tilde V_{2m}$ as defined in \eqref{eq:nonresonance_assumption}.
Moreover, the coefficients $\WW^\kk(\mu)$ vary $C^{r}$ smoothly in $\mu$.
\end{lemma}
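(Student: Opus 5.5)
We plan to prove the statement by the standard parameterization-method argument. We substitute the expansion \eqref{eq:ssm_expansion}, together with a reduced dynamics $\dot{\uu} = \RR(\uu;\mu) = \RR^{\mathbf{e}}(\mu)\uu + \sum_{2\le|\kk|\le 2m+1}\RR^{\kk}(\mu)\uu^{\kk}$, into the invariance equation
\begin{align*}
    \AA(\mu)\WW(\uu;\mu) + \ff_0(\WW(\uu;\mu);\mu) = D_{\uu}\WW(\uu;\mu)\,\RR(\uu;\mu),
\end{align*}
and collect terms order by order in $\uu$. First we diagonalize $\AA(\mu)$ on $E(\mu)$, working in complex eigencoordinates $\uu$ associated with $\lambda_{1,2}(\mu)$; this is a $C^r$ change of basis by assumption \ref{assump:simple_disjoint_eigvals} and \authcite{sibuyaGlobalPropertiesMatrices1965}. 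At order one, the equation is solved by $\SS_{\uu}(\mu)$ and $\RR^{\mathbf{e}}(\mu)=\mathrm{diag}(\lambda_1(\mu),\lambda_2(\mu))$, both $C^r$ in $\mu$. At each order $2\le|\kk|\le 2m+1$, the equation becomes the linear cohomological equation
\begin{align*}
    \bigl(\AA(\mu) - (k_1\lambda_1(\mu)+k_2\lambda_2(\mu))\mathbf{I}\bigr)\WW^{\kk}(\mu) = \SS_{\uu}(\mu)\RR^{\kk}(\mu) - \bigl[\ff_0 \text{ terms}\bigr]^{\kk}(\mu),
\end{align*}
whose right-hand side depends only on coefficients of lower order and on the Taylor coefficients of $\ff_0$ at $\xx_0(\mu)$. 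These Taylor coefficients are $C^{r}$ in $\mu$ up to the needed order, since $\ff\in C^{r+1}$; if necessary, we will note that only derivatives up to order $2m+1\le r$ enter.

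We then split the equation using $P_E(\mu)$. We choose the graph-style normalization $P_E\WW^{\kk}=0$ for $|\kk|\ge2$, which is what makes $\WW$ a graph over $E(\mu)$. The tangential component then determines $\RR^{\kk}(\mu)$ uniquely. If we pick the full normal form style instead, resonances within $E$ such as $\lambda_1 = (k_1\lambda_1+k_2\lambda_2)$ at $\mu_0$ would also have to be handled; the graph style avoids this because the tangential equation is purely algebraic in $\RR^{\kk}$. The normal component is
\begin{align*}
    \bigl(\AA(\mu)|_{(I-P_E)} - (k_1\lambda_1+k_2\lambda_2)\bigr)\WW^{\kk} = (I-P_E)\,\text{RHS}.
\end{align*}
This equation is uniquely solvable precisely when $k_1\lambda_1(\mu)+k_2\lambda_2(\mu)\notin \mathrm{spect}[\AA(\mu)]\setminus\{\lambda_1,\lambda_2\}$. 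By the definition of $\tilde V_{2m}$ in \eqref{eq:nonresonance_assumption}, this holds for all $|\kk|\le 2m+1$ and all $\mu\in\tilde V_{2m}$. We have to check that this covers all of $\tilde V_{2m}$, including $\mu\ge\mu_0$. There $\Re(k_1\lambda_1+k_2\lambda_2)=|\kk|\alpha\ge0$, while the other eigenvalues have negative real part by \eqref{eq:ordering_eigenvalues}, so no resonance with them is possible. For $\mu<\mu_0$, $\lambda_3$ and any other real eigenvalues are covered by Lemma \ref{lem:resonance_locations} and the choice $\tilde V_{2m}=(\mu_{2m},\mu_0+M)$. Complex eigenvalues $\lambda_l$ could in principle resonate with $k_1\neq k_2$ combinations; we would argue that $\tilde V_{2m}$ is defined as excluding all such resonances, so this is part of the hypothesis.

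Smoothness follows by induction on $|\kk|$. The operator $\AA(\mu)-(k_1\lambda_1+k_2\lambda_2)\mathbf{I}$ restricted to $\mathrm{range}(I-P_E(\mu))$ is $C^r$ in $\mu$ and invertible for every $\mu\in\tilde V_{2m}$. Its inverse is therefore $C^r$, by Cramer's rule or by the smoothness of matrix inversion on $GL$, and $P_E(\mu)$ is $C^r$ as a Riesz projection. If the right-hand side is $C^r$, then so are $\WW^{\kk}$ and $\RR^{\kk}$. We expect the main obstacle to be the bookkeeping of regularity. Each Taylor coefficient of $\ff_0$ of order $j$ at the moving fixed point $\xx_0(\mu)$ is a $j$-th derivative of a $C^{r+1}$ function composed with a $C^r$ map, which could lose derivatives. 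We would handle this by noting that the statement only needs $C^r$ dependence of the finitely many coefficients, and if needed we would assume $\ff$ is sufficiently smooth, $C^\infty$ or with $r\ge 2m+1$ plus margin, as is implicit in the expansion \eqref{eq:ssm_expansion}. Finally, uniqueness holds because each order's linear system has a unique solution under the fixed normalization. The computed coefficients coincide with those of the primary SSM, the center manifold, or the unstable manifold, and for the pseudo-unstable manifold at resonance with its $C^{2m+1}$ jet, since any $C^{2m+1}$ invariant manifold tangent to $E(\mu)$ must satisfy the same order-by-order equations.
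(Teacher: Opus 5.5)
Your proposal is correct and follows essentially the paper's route. The paper's appendix proof defers to Cabr\'e, Fontich and de la Llave and spells out the semisimple case: it derives order-by-order cohomological equations from the graph-style invariance equation, shows they are invertible on $\tilde V_{2m}$ for $|\kk|\le 2m+1$, and gets $C^r$ dependence by induction on $|\kk|$. You split with $P_E(\mu)$ where the paper uses a $C^r$ eigenbasis and diagonal $\LL_{\kk}(\mu)$, which is only a cosmetic difference. You should state explicitly that the definition of $\tilde V_{2m}$ only excludes resonances of order $\le 2m$; the step to order $2m+1$ (for real $\lambda_l$) rests on $k_1\lambda_1+k_2\lambda_2\in\R\Leftrightarrow k_1=k_2$.

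Your regularity caveat is well founded and goes beyond the paper, whose induction overlooks that the $j$-th Taylor coefficient of $\ff_0$ at $\xx_0(\mu)$ is in general only $C^{r+1-j}$ in $\mu$. The stated $C^r$ dependence therefore genuinely needs extra smoothness of $\ff$ (e.g.\ $C^\infty$, as in the lid-driven cavity application).
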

\begin{proof}
    See Appendix \ref{sec:proof_lemma_low_order_ssm_coeffs}
\end{proof}
The full dynamics restricted to $W(E;\mu)$ are conjugate to the reduced dynamics
\begin{align}
    \dot{\uu}=\RR(\uu, \mu) =\sum_{1 \leq |k|\leq 2m+2} \RR^{\kk}(\mu) \uu^{\kk}+ o(\norm{\uu}^{2m+2}),
    \label{eq:red_dyn_expansion}
\end{align}
where we have written the Taylor expansion up to order $2m+2$. The following lemma shows that these coefficients also smoothly persist across the resonance.
\begin{lemma}[Persistence of low-order reduced dynamics coefficients]
\label{lem:low_order_red_dyn_coeffs}

Under the assumptions \ref{assump:bifurcating_eigenvalue_pair}-\ref{assump:simple_disjoint_eigvals}, 
the reduced dynamics coefficients $\RR^{\kk}(\mu)$ are uniquely computable
for all multiindices $|\kk|\le 2m+2$ and all $\mu\in\tilde V_{2m}$ as defined in \eqref{eq:nonresonance_assumption}.
Moreover, the coefficients $\RR^\kk(\mu)$ vary $C^{r}$ smoothly in $\mu$.
\end{lemma}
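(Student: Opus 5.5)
We would prove Lemma \ref{lem:low_order_red_dyn_coeffs} the same way as Lemma \ref{lem:low_order_ssm_coeffs}, solving the invariance equation order by order. Substituting \eqref{eq:ssm_expansion} and \eqref{eq:red_dyn_expansion} into
\begin{align*}
    D_{\uu}\WW(\uu;\mu)\,\RR(\uu;\mu) = \AA(\mu)\WW(\uu;\mu) + \ff_0(\WW(\uu;\mu);\mu)
\end{align*}
and collecting terms of order $|\kk|$ gives a linear cohomological equation for the pair $(\WW^{\kk}(\mu),\RR^{\kk}(\mu))$:
\begin{align*}
    \AA(\mu)\WW^{\kk}(\mu) - \sum_{|\jj|=|\kk|} \bigl(D_{\uu}(\WW^{\jj}\uu^{\jj})\,\RR^{1}(\mu)\uu\bigr)_{\kk} - \SS_{\uu}(\mu)\RR^{\kk}(\mu) = \mathbf{F}^{\kk}(\mu).
\end{align*}
Here $\RR^1(\mu)$ is the linear part, conjugate to $\AA(\mu)|_{E(\mu)}$, so its eigenvalues are $\lambda_{1,2}(\mu)$. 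The right-hand side $\mathbf{F}^{\kk}(\mu)$ depends only on $\WW^{\jj},\RR^{\jj}$ with $|\jj|<|\kk|$ and on the Taylor coefficients of $\ff_0$ at $\xx_0(\mu)$. We fix the \emph{graph-style} normalisation already implicit in \eqref{eq:ssm_expansion}, namely $P_E(\mu)\WW^{\kk}(\mu)=0$ for $|\kk|\ge 2$. This makes the equation block-diagonal.

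First we apply $P_E(\mu)$. The tangential component of $\WW^{\kk}$ vanishes, so this projection gives the explicit formula
\begin{align*}
    \SS_{\uu}(\mu)\RR^{\kk}(\mu) = -P_E(\mu)\mathbf{F}^{\kk}(\mu),
\end{align*}
which determines $\RR^{\kk}$ uniquely because $\SS_{\uu}(\mu)$ has full column rank on $E(\mu)$. Next we apply $I-P_E(\mu)$. The normal component is then governed by the operator $\WW\mapsto \AA(\mu)|_{E^\perp}\WW - (\text{Lie derivative along }\RR^1)$ on homogeneous polynomials of degree $|\kk|$. Its spectrum is $\{\lambda_l - (k_1\lambda_1+k_2\lambda_2)\}$ with $l\ge 3$. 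On $\tilde V_{2m}$ no resonance of order $\le 2m+1$ occurs. This follows from the definition \eqref{eq:nonresonance_assumption}, the fact that odd-order resonances with the real $\lambda_3$ are impossible, and assumption \ref{assump:simple_disjoint_eigvals} for the complex $\lambda_l$ in the relevant range. Hence this operator is invertible for $|\kk|\le 2m+1$. This is exactly the content of Lemma \ref{lem:low_order_ssm_coeffs}, which we invoke. So for all orders $|\kk|\le 2m+2$, each $\RR^{\kk}$ requires only $\mathbf{F}^{\kk}$, that is, the $\WW^{\jj}$ with $|\jj|\le 2m+1$. This explains why the reduced dynamics coefficients reach one order higher than the manifold coefficients. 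We proceed by induction on $|\kk|$, starting from $\RR^{1}(\mu)=\SS_{\uu}(\mu)^{+}\AA(\mu)\SS_{\uu}(\mu)$.

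For smoothness, the data are $C^r$ in $\mu$: $\AA(\mu)$, $P_E(\mu)$ and $\SS_{\uu}(\mu)$ are $C^r$ by assumption \ref{assump:simple_disjoint_eigvals}, and so are the Taylor coefficients of $\ff_0$. Lemma \ref{lem:low_order_ssm_coeffs} gives $\WW^{\jj}\in C^r$ for $|\jj|\le 2m+1$. The formula for $\RR^{\kk}$ is then polynomial in these quantities combined with the $C^r$ left inverse of $\SS_{\uu}(\mu)$, so $\RR^{\kk}\in C^r(\tilde V_{2m})$.

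The main obstacle is uniqueness. Even at orders $\le 2m+1$ there are \emph{inner} resonances $k_1\lambda_1+k_2\lambda_2=\lambda_{1,2}$; for example $\lambda_1 = 2\lambda_1+\lambda_2$ holds exactly at $\alpha=0$. These make the normal-form style of parametrisation non-unique and non-smooth across $\mu_0$. We would avoid them by insisting on the graph normalisation $P_E\WW^{\kk}=0$. In that normalisation the tangential equation carries no small divisor at all, since it is solved by the explicit formula above, and only the outer nonresonance guaranteed on $\tilde V_{2m}$ is needed. We would state this choice explicitly as the sense in which the coefficients are ``uniquely computable''.
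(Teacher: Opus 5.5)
Your proof is correct, and its core is the paper's. With the graph normalization $P_E(\mu)\WW^{\kk}(\mu)=0$, which is the chart $\SS_{\uu}(\mu)\uu=P_E(\mu)\xx$ the paper also uses, the $E(\mu)$-component of the invariance equation is exactly the paper's reduced vector field $\AA_{\uu}(\mu)\uu+\ff_{\uu}(\uu,\hh(\uu;\mu);\mu)$. Since $\ff_{\uu}$ is at least quadratic, its terms of order $2m+2$ see $\hh$ only through order $2m+1$.

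The two routes differ in how this is made precise. You use formal coefficient matching in the coupled system for $(\WW^{\kk},\RR^{\kk})$. This shows explicitly why the tangential equation has no small divisors, so inner resonances such as $2\lambda_1+\lambda_2=\lambda_1$ at $\alpha=0$ are harmless; the paper leaves that point implicit. The paper instead splits $\hh=\tilde{\hh}+\GammaGamma$ with $\GammaGamma=o(\norm{\uu}^{2m+1})$. It then uses the mean-value inequality to show that $\GammaGamma$ affects the reduced dynamics only at $o(\norm{\uu}^{2m+2})$.

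That analytic step is what carries the lemma at a resonance of order $2m+2$ inside $\tilde V_{2m}$. There the invariant manifold is in general only $C^{2m+1}$ and non-unique. So the full invariance equation has no order-$(2m+2)$ coefficient of $\WW$ to collect, and one must know that every such manifold induces reduced dynamics with the same Taylor polynomial through order $2m+2$. Your order count gives exactly this once you apply it to the exact tangential identity and add that one-line remainder estimate.

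One small correction. Assumption \ref{assump:simple_disjoint_eigvals} only separates $\lambda_{1,2,3}$ from the rest of the spectrum. It does not rule out resonances $\lambda_l=k_1\lambda_1+k_2\lambda_2$ with complex $\lambda_l$, $l\ge 4$. The nonresonance through order $2m+1$ you need is the one built into $\tilde V_{2m}$ and used in Lemma~\ref{lem:low_order_ssm_coeffs}, which you invoke anyway.
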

\begin{proof}
    See Appendix \ref{sec:proof_lemma_low_order_red_dym_coeffs}
\end{proof}
We summarize these findings in the following theorem.
\begin{theorem}[Smooth computation of low-order SSM coefficients through a Hopf bifurcation]
\label{thm:low_order_ssm_hopf}

Consider the system \eqref{eq:system_around_fixed_point} with a Hopf bifurcation at $\mu = \mu_0$, satisfying the spectral condition \ref{assump:bifurcating_eigenvalue_pair}. Let $\Tilde{V}_{2m} = (\mu_{2m}, \mu_0+M)$ for some fixed $M>0$ for which \ref{assump:bifurcating_eigenvalue_pair}-\ref{assump:simple_disjoint_eigvals} hold, with $\mu_{2m}$ defined as in Lemma \ref{lem:resonance_locations}.
Then the following statements are true:

\begin{enumerate}
    \item For all $\mu \in \Tilde{V}_{2m}$, for which the nonresonance conditions \eqref{eq:nonresonance_conditions} hold, there exists a unique, class $C^r$, invariant, 2D slow SSM, $W(\mu)$, tangent to the slow eigenspace $E^{\mathrm{slow}}(\mu)$ at $\xx_0(\mu)$. At resonances, $W(\mu)$ is $C^{2m+1}$ and non-unique.

    \item For $\mu > \mu_0$, $W(\mu)$ coincides with the 2D unstable manifold $W^u(\mu)$, and for $\mu \downarrow \mu_0$, it continues smoothly to the center manifold $W^c(\mu_0)$.

    \item The Taylor coefficients of $W(\mu)$ up to order $2m+1$,
    \begin{align*}
        \xx = \WW(\uu;\mu) = \SS_{\uu}(\mu)\uu + \sum_{2\leq\abs{\kk}\leq 2m+1} \WW^{\kk}(\mu) \uu^{\kk} + o(\norm{\uu}^{2m+1}),
    \end{align*}
    are $C^{r}$ smooth in $\mu$ across $\Tilde{V}_{2m}$.

    \item The Taylor coefficients of the reduced dynamics up to order $2m+2$,
    \begin{align*}
        \dot{\uu}=\RR(\uu, \mu) =\sum_{1 \leq |k|\leq 2m+2} \RR^{\kk}(\mu) \uu^{\kk}+ o(\norm{\uu}^{2m+2}),
    \end{align*}
    are $C^{r}$ smooth in $\mu$ across $\Tilde{V}_{2m}$.
\end{enumerate}
\end{theorem}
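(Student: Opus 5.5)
The plan is to assemble the theorem from the ingredients already established, treating each of the four statements separately.

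\emph{Statement 1.} On $\Tilde{V}_{2m}\cap\{\mu<\mu_0\}$, at every parameter where \eqref{eq:nonresonance_conditions} holds, assumptions \ref{assump:uniformly_stable_spectrum}--\ref{assump:nonresonance_conditions} are satisfied pointwise in $\mu$. Here the spectral quotient \eqref{eq:spectral_quotient} is finite because $\alpha(\mu)<0$ and $\Re\lambda_N(\mu)$ is bounded. So the result of \authcite{cabreParameterizationMethodInvariant2003} quoted in Section \ref{sec:SSM_Theory} gives a unique $C^r$ primary SSM tangent to $E^s(\mu)$. Two things need checking for this to apply:
\begin{itemize}
\item $r\geq Q(\mu)$ must hold. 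If this fails, I would weaken the claim to the smoothness class provided by \authcite{cabreParameterizationMethodInvariant2003}.
\item Resonances of $\lambda_{1,2}$ with complex eigenvalues $\lambda_l$, $l\geq 4$, must be excluded. By the definition of $\Tilde{V}_{2m}$, such resonances only occur at order $>2m$. The real resonances with $\nu$ are handled by \eqref{eq:nonresonance_conditions_3D}.
\end{itemize}
At a resonant $\mu$, the order is at least $2m+2$ by Lemma \ref{lem:resonance_locations} and the construction of $\Tilde{V}_{2m}$. The spectral-gap argument of the preceding subsection then gives a pseudo-unstable manifold $W^+_\gamma(\mu)$ of class $C^{2m+1}$. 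It is non-unique in general, since only the graph-transform construction of \authcite{irwinNewProofPseudostable1980} is available.

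\emph{Statement 2.} For $\mu>\mu_0$ the pair $\lambda_{1,2}$ is unstable and the rest of the spectrum is stable by \eqref{eq:ordering_eigenvalues}. The slow spectral subspace is therefore $E^u(\mu)$, and the unique $C^r$ unstable manifold is the natural choice for $W(\mu)$. The continuation to $\mu_0$ follows from the identity $\hat W^c(\mu_0)\cap\{\mu=\mu_1\}=W^u(\mu_1)$ established via \authcite{vanderbauwhedeCentreManifoldsNormal1989}. Since $\hat W^c(\mu_0)$ is a single $C^r$ manifold in $(\xx,\mu)$, its $\mu$-slices vary $C^r$ and limit to $W^c(\mu_0)$ as $\mu\downarrow\mu_0$.

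\emph{Statements 3 and 4.} These are exactly Lemmas \ref{lem:low_order_ssm_coeffs} and \ref{lem:low_order_red_dyn_coeffs}. The one subtle point is consistency: the coefficients computed from the cohomological equations must coincide with the Taylor coefficients of whichever manifold exists at a given $\mu$.
\begin{itemize}
\item For the primary SSM, the unstable manifold and the center manifold, this is immediate. Each is at least $C^{2m+1}$, so substituting its expansion into the invariance equation and matching powers of $\uu$ up to order $2m+1$ reproduces the cohomological equations. Their solutions are unique by the lemma.
\item For a non-unique pseudo-unstable manifold, the same matching shows that every choice has the same $(2m+1)$-jet. The non-uniqueness only enters at orders $\ge 2m+2$, where the resonance lives.
\end{itemize}

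I expect the main obstacle to be this identification at $\mu=\mu_0$ and at resonant $\mu$. There the invariance equation must be differentiated $2m+1$ times, which requires the manifold to be genuinely $C^{2m+1}$, and it must be parametrized as a graph over $E(\mu)$ in the same gauge (same normal form style for $\RR$) used in the lemmas. I would fix the graph-style parametrization $P_E(\mu)\WW(\uu;\mu)=\SS_\uu(\mu)\uu$. This makes $\RR^\kk$ determined by projecting the invariance equation onto $E(\mu)$, and it removes the gauge freedom. Given that, $C^r$ dependence on $\mu$ is inherited from the lemmas.
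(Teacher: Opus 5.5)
Your proposal is correct and takes essentially the paper's route: the theorem is assembled from the case list of Section~\ref{sec:summary_of_invariant_manifolds_near_a_hopf_bifurcation} (primary SSM away from resonance, $C^{2m+1}$ pseudo-unstable manifold at resonance, and the slices $\hat{W}^c(\mu_0)\cap\{\mu=\mu_1\}=W^u(\mu_1)$ of the extended center manifold) together with Lemmas~\ref{lem:low_order_ssm_coeffs} and~\ref{lem:low_order_red_dyn_coeffs}. Your jet-matching argument, which shows that any invariant $C^{2m+1}$ graph over $E(\mu)$ tangent at $\xx_0(\mu)$ (including a non-unique pseudo-unstable manifold) has the same $(2m+1)$-jet as the cohomological solution, makes explicit a step the paper leaves implicit, and your caveat that $r\geq Q(\mu)$ fails for finite $r$ as $\alpha(\mu)\to 0^-$ is a real point that the paper's statement also glosses over.
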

Analogous results hold for other types of bifurcations too: beyond a parameter value after which all remaining resonances are of high order, all low-order Taylor coefficients will persist smoothly. The uniqueness of the center manifold is specific to Hopf bifurcations.

\section{Results}

In this section we demonstrate our theoretical findings on two examples. The first is a minimal example, which is general enough to show that the loss of smoothness at resonance is a generic phenomenon near bifurcations. 
Our second example is a high-dimensional finite-element simulation of the Navier-Stokes equations, for which we construct two-dimensional parametric reduced-order model from data that accurately predicts the full transition to periodic dynamics. 
Moreover, this data-driven approach is able to overcome the smoothness limitations that occur due to resonances in the equation-driven setting. 

\subsection{Parametric SSMs for the Hopf Normal Form}

We consider the classic cubic Hopf normal form (see, e.g., \authcite{guckenheimerNonlinearOscillationsDynamical1984}) with an additional stable transverse direction as a canonical example for the construction and regularity analysis of SSMs near a Hopf bifurcation. 
\begin{align}
    \dot{x} &= \mu x - \omega y - x(x^2+y^2), \nonumber\\
    \dot{y} &= \omega x + \mu y - y(x^2+y^2), \label{eq:toy_model_hopf} \\
    \dot{z} &= \sigma z + x^2+y^2, \qquad \sigma < 0. \nonumber
\end{align}
The subsystem in $(x,y)$ captures the leading-order dynamics near any nondegenerate Hopf bifurcation, while the additional transverse $z$-direction represents the simplest nonlinear coupling relevant for the existence and smoothness of SSMs. For this reason, pathologies in the regularity of the SSM expansion present in this system will also typically occur in high-dimensional systems undergoing a Hopf bifurcation.
Similar systems have been examined in the past as standard examples to show the loss of regularity for center manifolds, (see, e.g., \authcite{guckenheimerNonlinearOscillationsDynamical1984}, \authcite{vanderbauwhedeCentreManifoldsNormal1989}, and \authcite{vanstrienCenterManifoldsAre1979}).

In complex coordinates $u=x+iy$, we can write system \eqref{eq:toy_model_hopf} as
\begin{align*}
    \dot{u} &= F(u) = (\mu+\omega i)u - \abs{u}^2u, \\
    \dot{z} &= \sigma z + \abs{u}^2.
\end{align*}

Due to the rotational symmetry $F(e^{i\theta}u) = e^{i\theta}F(u)$, any invariant manifold over $(u,\Bar{u})$ also has to be rotationally invariant. Note that a function $h$ that is real analytic on an open domain including the origin, is rotationally invariant if and only if $h(u,\Bar{u})=h(\abs{u}^2)$. Indeed, expanding 
\begin{align*}
    h(u,\Bar{u}) = \sum_{n,m=1}^\infty a_{nm} u^n \Bar{u}^m,
\end{align*}
and enforcing
\begin{align*}
    (e^{i\theta}u)^n (\overline{e^{i\theta}u})^m &= e^{(n-m)i\theta}u^n \Bar{u}^m \equiv u^n \Bar{u}^m,
\end{align*}
implies $n=m$, and hence
\begin{align*}
    u^n \Bar{u}^n &= \abs{u}^{2n}.
\end{align*}
More generally, this property follows from $SO(2)$ invariance of the system (see, e.g., \authcite{fieldDynamicsSymmetry2007}).

Therefore, we can seek an expression, $z=h(\rho)$, where $\rho=\abs{u}^2$ and
\begin{align*}
    \dot{\rho} &= 2\mu \rho - 2\rho^2, \\
    \dot{z} &= \sigma z + \rho.
\end{align*}
Substituting the formal Taylor expansion
\begin{align*}
    z = h(\rho) = \sum_{k=1}^\infty a_k \rho^k,
\end{align*}
into the invariance equation
\begin{align}
    \sigma h(\rho)+\rho = 2h'(\rho)(\mu \rho-\rho^2),
    \label{eq:invariance_ode_toy}
\end{align}
then shifting the indices yields
\begin{align}
    \sum_{k=1}^{\infty} \sigma a_k \rho^k + \rho =  \sum_{k=1}^{\infty} 2k\mu a_k \rho^k - \sum_{k=2}^{\infty} 2(k-1) a_{k-1} \rho^k.
    \label{eq:expansion_in_invariance_ode_toy}
\end{align}
Equating equal orders of $\rho$ in \eqref{eq:expansion_in_invariance_ode_toy} we obtain the recursion
\begin{align}
    a_1 &= \frac{1}{2\mu - \sigma}, \nonumber \\
    a_{k} &= \frac{2(k-1)}{2k\mu - \sigma} a_{k-1}, \qquad k\geq 2.
    \label{eq:recursion_coeffs}
\end{align}
The convergence radius of this power series can be computed as 
\begin{align*}
    \rho_0 = \lim_{k\rightarrow \infty}\abs{\frac{a_k}{a_{k+1}}}=\abs{\mu}.
\end{align*}
In $(x,y)$-coordinates, this corresponds to a radius of $r_0=\sqrt{\abs{\mu}}$, which also matches the radius of the limit cycle arising in the Hopf bifurcation.
The reduced dynamics, $\dot{\rho} = 2\mu \rho - 2\rho^2$, on the parametric SSM remain smooth for every $\mu$.

Next, we will characterize solutions to \eqref{eq:invariance_ode_toy} more precisely and assess their regularity. As the loss of regularity only happens pre-bifurctation, we limit our analysis to $\mu<0$. For this, notice that the equation \eqref{eq:invariance_ode_toy} can be solved on $\rho\in (0,\infty)$ using an integrating factor approach, yielding 
\begin{align}
    h(\rho)&=M(\rho)^{-1}M(\rho_0)h(\rho_0)-M(\rho)^{-1}\int_{\rho_0}^{\rho}\frac{M(s)}{2(s-\mu)}ds = h_h(\rho)+h_p(\rho),
    \label{eq:true_solution_toy_example}\\
    M(\rho)&=\left(\frac{\abs{\mu}-\sign(\mu)\rho}{\rho}\right)^{\sigma/(2\mu)} \nonumber,
\end{align}
where the first summand in \eqref{eq:true_solution_toy_example} is the homogeneous solution and the second is the particular solution. We let $\alpha = \sigma/(2\mu)$ and write the integrand in \eqref{eq:true_solution_toy_example} as
\begin{align*}
    \frac{M(s)}{2(s-\mu)} = \frac{1}{2s}\left(1+\frac{\abs{\mu}}{s}\right)^{\alpha-1} = \frac{1}{2s}\sum_{k=0}^{\infty} \binom{\alpha-1}{k}  \left(\frac{\abs{\mu}}{s}\right)^{\alpha-1-k},
\end{align*}
where we used the generalized binomial theorem which is uniformly convergent for $s<\abs{\mu}$. Therefore, we can swap summation and integration over the compact interval $[\rho_0, \rho]$ in order to integrate each summand individually. This gives us the final result
\begin{align*}
    h_p(\rho) = -\frac{1}{2} \left( \frac{\rho}{\rho+\abs{\mu}} \right)^{\alpha}\Bigg(&\sum_{\substack{k=0, \\ k\neq \alpha-1}}^{\infty}\binom{\alpha-1}{k} \frac{\abs{\mu}^{\alpha-1-k}}{k-\alpha+1}\left(\rho^{k-\alpha+1} - \rho_0^{k-\alpha+1}\right) \\
    &+ \log\left(\frac{\rho}{\rho_0}\right)
    \mathbbm{1}_{\Z^{+}}(\alpha)
    \Bigg),
\end{align*}
which satisfies $\lim_{\rho \downarrow 0}h_p(\rho)=0$. Here, $\mathbbm{1}_A$ denotes the indicator function of a set $A$.

For $\alpha \notin \mathbb{Z}^+$, the homogeneous solution in \eqref{eq:true_solution_toy_example} can be chosen so that $h$ is analytic at the origin, and the corresponding invariant manifold in $(x,y)$ is also analytic. The other solutions (called fractional SSMs, see  \authcite{hallerNonlinearModelReduction2023}), are in the the Hölder space $C^{\lfloor \alpha \rfloor, \alpha - \lfloor \alpha \rfloor}$ around the origin, which in $(x,y)$-coordinates corresponds to $C^{\lfloor 2\alpha \rfloor, 2\alpha - \lfloor 2\alpha \rfloor}$.

When $\alpha = k_0 \in \mathbb{Z}^+$, resonance leads to terms like $\rho^{k_0}\log(\rho)$, so the smoothness is $C^{k_0-1, \delta}$ for any $\delta<1$ in $\rho$ and $C^{2k_0-1, \delta}$ in $(x,y)$. This loss of regularity matches the singularity in the Taylor coefficient $a_{k_0}$ at resonance and shows that the smoothness guarantees at resonance in Section \ref{sec:summary_of_invariant_manifolds_near_a_hopf_bifurcation} are sharp, because $\alpha$ determines the spectral gap.

In summary, for $\alpha = \sigma/(2\mu)$, all invariant manifolds in $(x,y)$ belong to
a smoothness class depending on $\alpha$ at the origin, with the existence of a unique analytic solution guaranteed if and only if $\alpha \notin \mathbb{Z}^+$. Note that this is consistent with Hartman's theorem (\authcite{hartmanLocalHomeomorphismsEuclidean1960}), as the lowest possible maximal smoothness is $C^1$, which occurs at $\alpha=1$.

For $\mu > 0$, \eqref{eq:true_solution_toy_example} remains the true general solution of the invariance equation \eqref{eq:invariance_ode_toy} for $\rho \in (0,\mu)$. With the approach of expanding the integrand for $s<\mu$, we calculate
\begin{align*}
    h_p(\rho) &= \frac{1}{2} \left( \frac{\rho}{\mu-\rho} \right)^{\alpha}\Bigg(\sum_{k=0}^{\infty}\binom{\alpha-1}{k} \frac{(-1)^k\mu^{\alpha-1-k}}{k-\alpha+1}\left(\rho^{k-\alpha+1} - \rho_0^{k-\alpha+1}\right)\Bigg).
\end{align*}
Therefore, in contrast to the pre-bifurcation case, no loss of regularity occurs for $\mu>0$. The analytic solution coincides with the unstable manifold of the system, and it exists for every $\mu>0$ as guaranteed by the center manifold theorem. Here, the analytic manifold also uniquely satisfies $h(0)=0$, as all other solutions blow up at the origin.
\begin{figure}[ht]
    \centering
    \begin{subfigure}[t]{0.45\linewidth}
        \includegraphics[width=\linewidth]{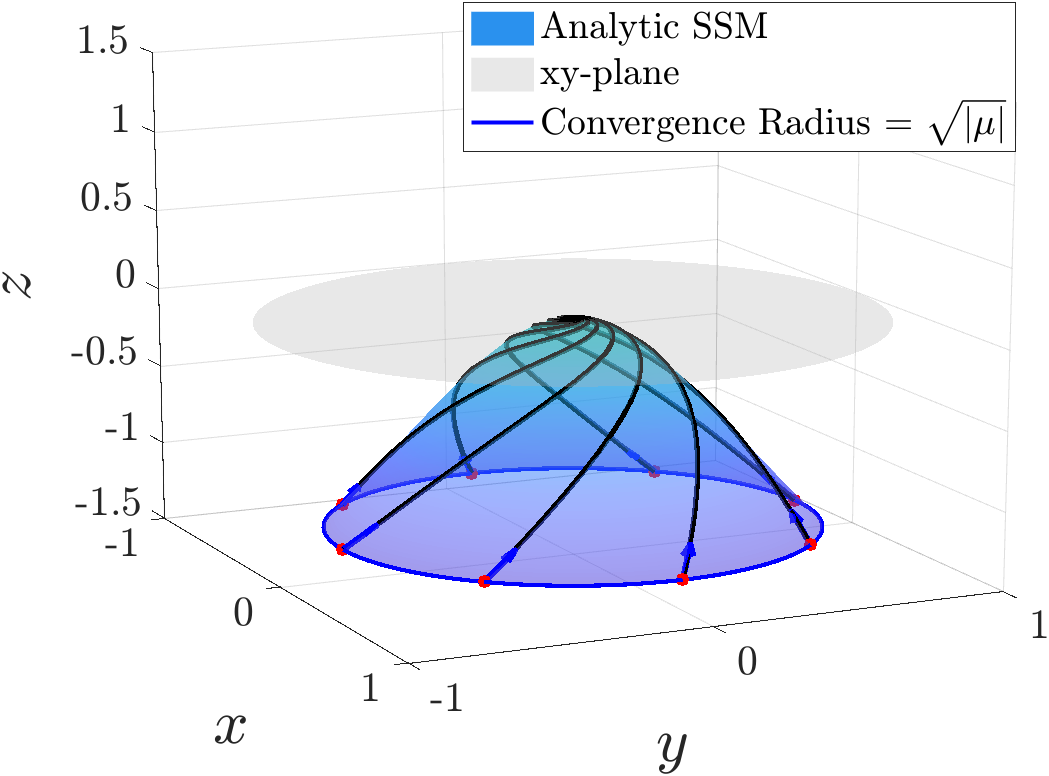}
        \caption{Analytic SSM before the $\alpha=1$ resonance.}
        \label{fig:sub1}
    \end{subfigure}
    \hfill
    \begin{subfigure}[t]{0.45\linewidth}
        \includegraphics[width=\linewidth]{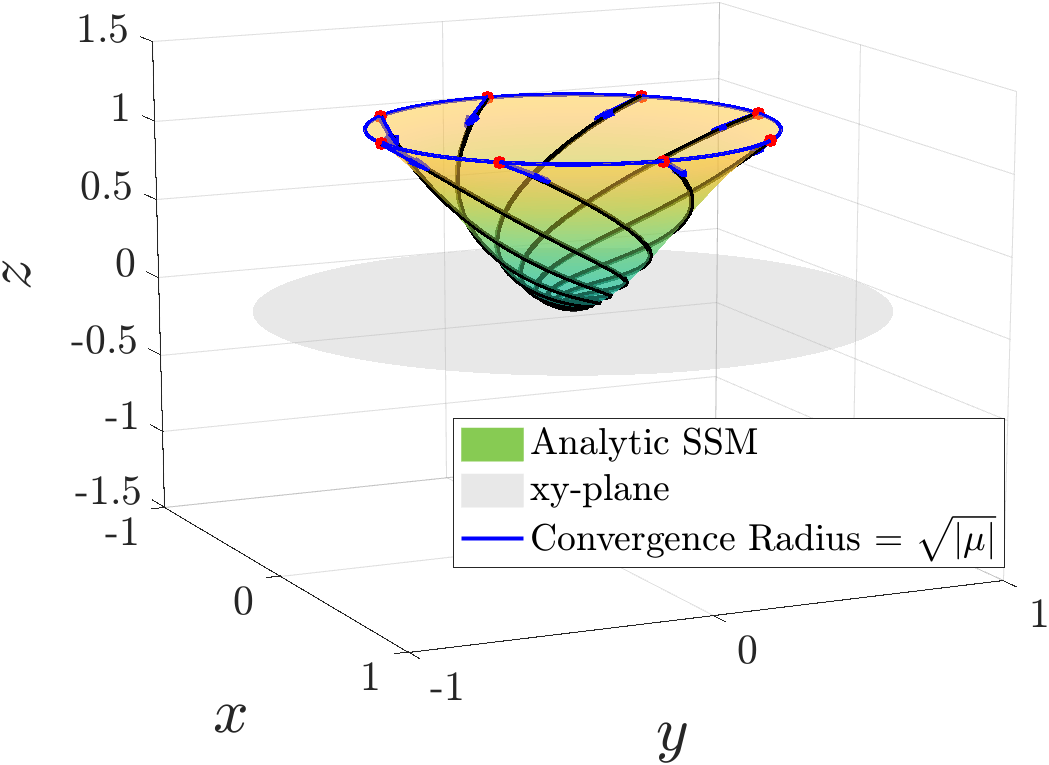}
        \caption{Analytic SSM after the $\alpha=1$ resonance.}
        \label{fig:sub2}
    \end{subfigure}
    \hfill
    \begin{subfigure}[t]{0.45\linewidth}
        \includegraphics[width=\linewidth]{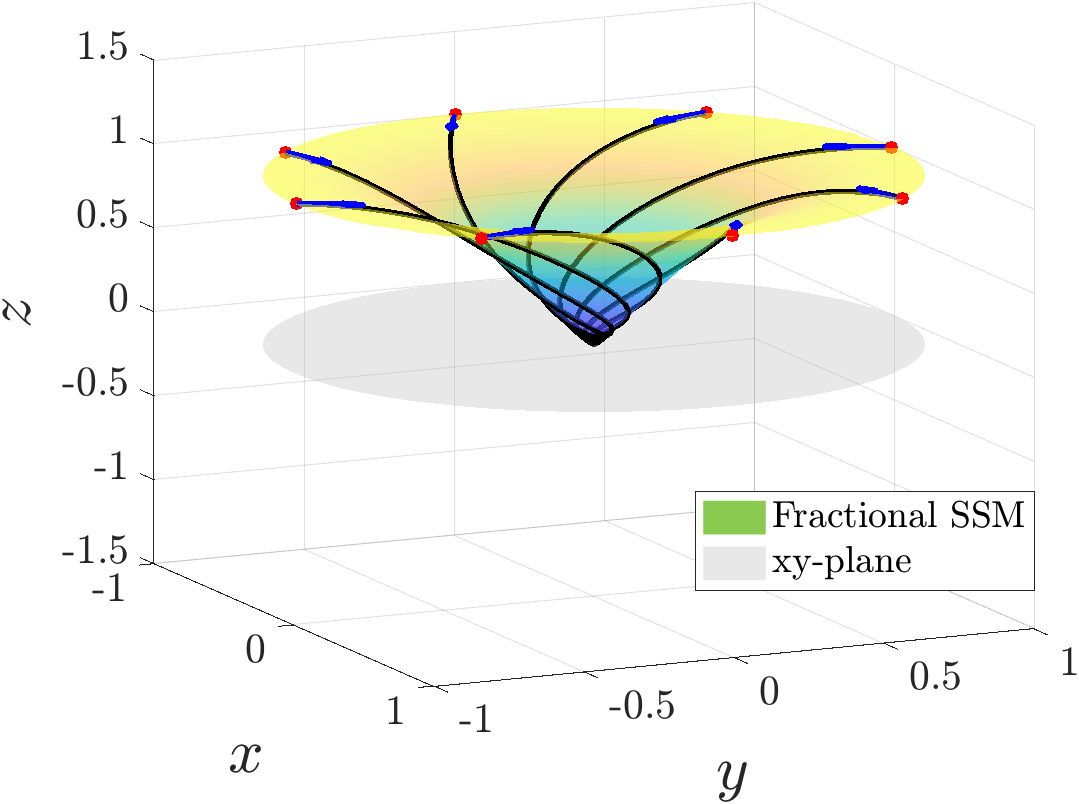}
        \caption{Fractional SSM persisting through resonance.}
        \label{fig:sub3}
    \end{subfigure}
    \hfill
    \begin{subfigure}[t]{0.45\linewidth}
        \includegraphics[width=\linewidth]{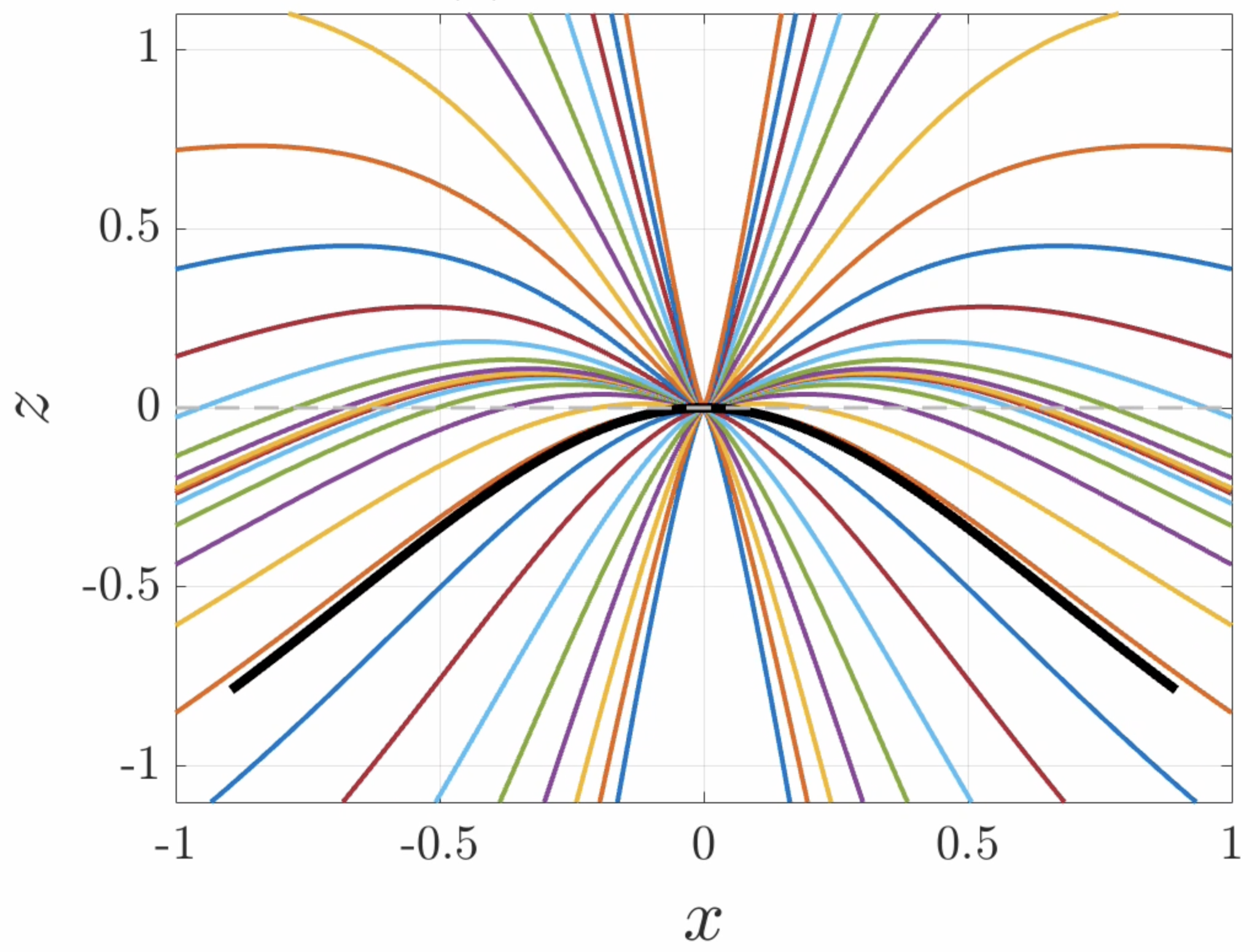}
        \caption{The $y=0$ cross-section of fractional SSMs for different $h(\rho_0)$ (coloured) and analytic SSM (black).}
        \label{fig:sub4}
    \end{subfigure}
    \caption{Analytic and fractional SSMs of system \eqref{eq:toy_model_hopf} with trajectories overlayed in (a)-(c), and as a cross-section in (d).}
    \label{fig:all_four}
\end{figure}

\begin{remark}
As the transformation to normal form has a finite domain of convergence, it is worth considering the case in which the subsystem of \eqref{eq:toy_model_hopf} in $(x,y)$ contains higher-order nonlinearities or $z$-dependent terms. If rotational symmetry is preserved, then inserting the formal Taylor expansion and equating orders leads to a recursion similar to \eqref{eq:recursion_coeffs}, but the coefficient $a_k$ will depend recursively on multiple earlier coefficients. The resonances, however, appear in the same manner.

If instead the order of nonlinearity in the $z$-equation is $2p > 2$ for $p \in \N$, (i.e., the leading-order nonlinearity might be missing), then the resonances introduce nonunique solutions for low-order coefficients and there exists a solution whose Taylor coefficients only become singular after order $k=p$.
\end{remark}

\subsection{Data-Driven Parametric SSMs for the Lid-Driven Cavity Flow}
\label{sec:lid_driven_cavity}

Originally explored by \authcite{kawagutiNumericalSolutionNavierStokes1961} in the early 1960s, the lid-driven cavity has become a standard benchmark for incompressible Navier-Stokes solvers. To this day, the results of \authcite{ghiaHighReSolutionsIncompressible1982} constitute the most commonly used comparison. For a review of the relevant literature, we refer to \authcite{kuhlmannLidDrivenCavity2018}.

This system gives rise to a transition from laminar to chaotic flow depending on the Reynolds number $\mu$. As the latter is increased, the system undergoes a supercritical Hopf bifurcation, followed by quasiperiodic motion as subsequent pairs of eigenvalues cross the imaginary axis.
Multiple studies have been undertaken to localize the critical Reynolds value $\mu_{\text{crit}}$ at which the Hopf bifurcation takes place. While outliers do exist in the literature, the general consensus agrees with the bounds $\mu_{\text{crit}}\in [8000, 8050]$ reported by \authcite{bruneau2DLiddrivenCavity2006}. Notable studies falling within this range include \authcite{fortinLocalizationHopfBifurcations1997} with $\mu_{\text{crit}} = 8000$ and, more recently, \authcite{auteriNumericalInvestigationStability2002} with $\mu_{\text{crit}}\in [8017.6, 8018.8]$. By solving the generalized eigenproblem associated to the linearized Navier-Stokes equation (see Appendix \ref{sec:discretization_navier_stokes}), we find the critical value to be $\mu_{\text{crit}} \approx 8015$, which is in good agreement with the literature. A better agreement has been obtained for the critical oscillation frequency $\omega_{\text{crit}}\in [2.83, 2.87]$. Indeed, in this study we find $\omega_{\text{crit}} = 2.83$ to be in agreement with this bound.

\subsubsection{Setup}

The dynamics of the lid-driven cavity are governed by the incompressible Navier-Stokes equations on the unit square $\Omega = (0,1)^2\subseteq \R^2$, given by
\begin{align}
    \partial_t u - \nu \Delta u + (u \cdot \nabla)u + \nabla p &= 0, \nonumber \\
    \nabla \cdot u &= 0, \nonumber \\
    \Gamma u &= b,
    \label{eq:Navier_Stokes}
\end{align}
with velocity $u$, pressure $p$, kinematic viscosity $\nu$, trace operator $\Gamma$ onto the boundary $\partial\Omega$, and function $b$ on the boundary prescribing a velocity profile.
For the lid-driven cavity the standard choice is 
\begin{align*}
    b(\xx) = 
    \begin{cases}
        (1,0) & \text{on } [0,1]\times\{1\}, \\
        (0,0) & \text{on } \partial\Omega \setminus ([0,1]\times\{1\}),
    \end{cases}
\end{align*}
which amounts to no-slip conditions on the cavity walls and a prescribed unit velocity in the $x$-direction at the lid.
The central parameter in this problem is the Reynolds number $\mu= UL/\nu$, where $U=1$ is the lid velocity and $L=1$ is the cavity length.

We simulate the flow using the open source toolbox \textit{FlowControl}\footnote{https://github.com/williamjussiau/flowcontrol}, which uses the Finite Element Method with standard P2-P1 Taylor-Hood elements and a semi-implicit multistep method for time integration.
As shown in Appendix \ref{sec:discretization_navier_stokes}, after discretizing \eqref{eq:Navier_Stokes} in space, the evolution of the velocities can be viewed as a trajectory of a finite-dimensional parametric dynamical system,
\begin{align}
    \dot{\xx} = \ff(\xx;\mu), \qquad \xx\in \R^{N}, 
    \qquad \mu \in \R^+,
    \qquad \ff\in C^{\infty}(\R^{N}\times \R^+,\R^{N}).
    \label{eq:parametric_ode}
\end{align}
The reason we focus on the velocities is because of a pressure singularity that is known to form in the numerical treatment of the lid-driven cavity (\authcite{kuhlmannLidDrivenCavity2018}).

\subsubsection{Parametric SSM construction}
Using the eigenvalues of the linear part of the discretized Navier-Stokes equations, we find a parameter regime $\mu \in [7900,8500]$, in which the bifurcating eigenvalue pair have the largest real part while the rest of the spectrum remains stable. 
More specifically, in this parameter range, $2\Re\lambda_{1,2}(\mu)>\lambda_{3}(\mu)$ holds. This means that, as shown in Section \ref{sec:persistence_ssm_coefficients}, we can choose $m=1$ in \eqref{eq:ssm_expansion} and \eqref{eq:red_dyn_expansion}.
The Hopf bifurcation happens around $\mu \approx 8015$ according to our linear analysis. Finally, a second bifurcation happens for some $\mu > 8500$. 
Following the \texttt{SSMLearn} algorithm of \authcite{cenedeseDatadrivenModelingPrediction2022}, we extract a 2D SSM from data for multiple values of the Reynolds number in order to interpolate their polynomial coefficients to obtain a parametric model.
Figure \ref{fig:parameter_range} shows the distribution of training and test parameters.
\begin{figure*}
    \centering
    \makebox[\textwidth][c]{%
        \includegraphics[width=1.5\textwidth]{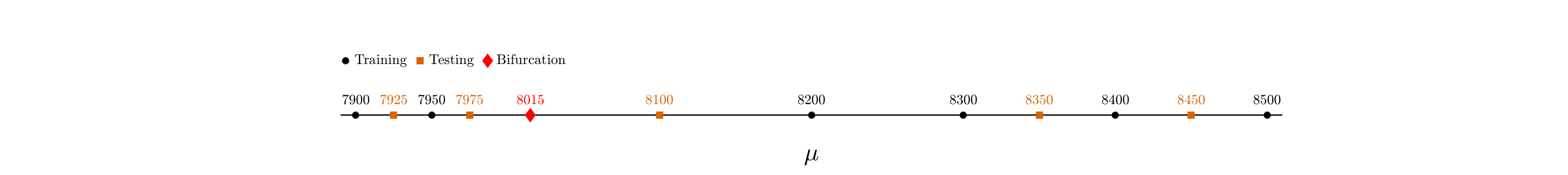}
    }
    \caption{Distribution of training and testing parameters.}
    \label{fig:parameter_range}
\end{figure*}

By generating data at each training parameter $\mu$, we construct a snapshot matrix whose dominant directions provide an approximation of the tangent space of the SSM. Since we seek a 2D SSM, retaining the two leading Proper Orthogonal Decomposition (POD) modes, which approximate the spectral subspace associated with the bifurcating eigenvalue pair, gives us a basis $\VV(\mu)\in \R^{N\times 2}$.
The chart yielding the reduced coordinates $\uu$ is then given by the orthogonal projection onto these POD modes,
\begin{align*}
    \uu = \VV(\mu)^T \xx.
\end{align*}
It is worth noting that singular value decompositions (SVDs) smoothly depend on parameters as long as blocks of singular values remain disjoint (see \authcite{dieciSmoothDecompositionsMatrices1999}). The SSM is then approximated as a multivariate polynomial graph of order $M_W$ over this approximate tangent space:
\begin{align*}
    \xx = \WW(\uu;\mu) = \VV(\mu) \uu + \sum_{2 \leq |\kk|\leq M_W} \WW^{\kk}(\mu) \uu^\kk.
\end{align*}
The polynomial coefficients $\WW_{\kk}$ are identified via least-squares regression by minimizing the reconstruction error (see Appendix \ref{sec:NMTE}) between the full-order state data $\xx$ and the corresponding reduced coordinates $\uu$.

Next, we seek the reduced dynamics, 
\begin{align*}
    \dot{\uu} = \RR(\uu;\mu) = \sum_{1 \leq |k|\leq M_R} \RR^{\kk}(\mu) \uu^{\kk},
\end{align*}
that are conjugate to the dynamics on the SSM, using a multivariate polynomial of order $M_R$. The time derivatives of the reduced coordinates $\dot{\uu}$ are obtained via finite differences, and the polynomial coefficients $\RR^{\kk}(\mu)$ of the reduced dynamics are again identified via least-squares regression.
To capture a Hopf bifurcation, we require at least $M_R\geq 3$.

As the coefficients are fit from data, it makes sense to choose $M_W$ and $M_R$ as low as possible to ensure robustness and avoid overfitting, while retaining high accuracy on unseen trajectories. According to Section \ref{sec:persistence_ssm_coefficients}, a choice of $M_W=2, \ M_R=3$ is justified given the spectrum. However, we find that ignoring the order $4$ resonance and choosing $M_W=4, \ M_R=5$ consistently yields better results. This is not inconsistent with the results in Section \ref{sec:persistence_ssm_coefficients}, as we are fitting polynomials to data instead of solving the invariance equations order by order using Taylor expansions.

\subsubsection{Data generation and Training}
Pre-Hopf bifurcation, we perturb the stable base flow moderately and truncate the data during its initial decay towards the SSM. Post-Hopf, the unstable base flow is perturbed by a very small amount. Due to the small real parts of the unstable eigenvalues, we choose this perturbation to be in the direction of the unstable eigenvectors, thus saving time with the numerical simulation. However, if the perturbation is chosen small enough and the simulation is run long enough, this specific choice of the perturbation direction is not necessary.

Altogether, this procedure yields data lying close to the SSM for different values of the Reynolds number. While we did use knowledge of the linear part in data generation, we emphasize that this is not required in a general setting.

For each training parameter, we fit a reduced order model using SSMLearn to a single trajectory. These models each achieve a low normalized mean trajectory error (NMTE) and normalized mean amplitude error (NMAE) on an unseen trajectory at that respective parameter value. For a definition of the error metrics, see Appendix \ref{sec:NMTE}.

\subsubsection{Interpolation}
As outlined in Section \ref{sec:SSM_Theory}, the fixed point $\xx(\mu)$, the spectral subspace $E(\mu)$ associated to $\lambda_{1,2}$, the low-order SSM coefficients $\WW^{\kk}(\mu)$, and the low-order reduced dynamics coefficients $\RR^{\kk}(\mu)$ all persist $C^\infty$ smoothly in $\mu$ across $\Tilde{V}$.
Therefore, we can justify interpolating these coefficients individually. The SSM models fitted on the training data are interpolated in $\mu$ using spline interpolation.

\subsubsection{Testing}
Interpolating the SSM models yields a parametric model for $\mu \in [7900,8500]$. We evaluate this model at unseen Reynolds numbers by providing an initial condition, projecting onto the interpolated tangent space, iterating the interpolated reduced dynamics and finally mapping to the SSM using the interpolated graph. 

By comparing to our ground truth simulation data from the same initial condition, we calculate the errors of this procedure averaged over $8$ test trajectories. These errors are displayed in Tables \ref{tab:NMTE_parametric_1} and \ref{tab:NMTE_parametric_2}. Our parametric model consistently achieves errors of $5\%$ or less.
The slightly higher error values at $\mu=8100$ can be explained by the fact that this testing value is the furthest away from any of the training parameters. This leads to a slight phase inaccuracy which can be seen by the phase-sensitive NMTE being higher than the NMAE. The Reconstruction Error remains low accross all testing parameters, which means that the trajectories do indeed lie close to the interpolated manifold. The choice of $M_W=4$ consistently outperforms the choice of $M_W=2$.

By calculating the eigenvalues of the linear part of the parametric reduced dynamics, we are able to make a prediction $\mu_{pred}$ for the bifurcation value. At $\mu_{pred} = 8019$, the prediction error to the true value that we calculated to be $\mu_0 = 8015$ is less than $0.05\%$. This value is remarkably close to that reported by \authcite{auteriNumericalInvestigationStability2002}.

\begin{table}[h!]
\centering
\begin{minipage}{0.48\linewidth}
\centering
\begin{tabular}{c c c c}
\hline
$\boldsymbol{\mu}$ & \multicolumn{3}{c}{\textbf{Errors (\%)}} \\
\cline{2-4}
 & NMTE & NMAE & RecError \\
\hline
7925 & 4.43 & 3.51 & 2.23 \\
7975 & 4.52 & 3.26 & 3.84 \\
8100 & 18.22 & 1.72 & 2.74  \\
8350 & 3.76 & 2.65 & 1.68 \\
8450 & 7.97 & 2.49 & 2.50 \\
\hline
\end{tabular}
\caption{Error metrics for $M_W=2, M_R=3$.}
\label{tab:NMTE_parametric_1}
\end{minipage}
\hfill
\begin{minipage}{0.48\linewidth}
\centering
\begin{tabular}{c c c c}
\hline
$\boldsymbol{\mu}$ & \multicolumn{3}{c}{\textbf{Errors (\%)}} \\
\cline{2-4}
 & NMTE & NMAE & RecError \\
\hline
7925 & 1.59 & 1.24 & 1.24 \\
7975 & 1.71 & 1.66 & 1.20 \\
8100 & 7.31 & 4.82 & 0.93  \\
8350 & 1.08 & 1.71 & 0.12 \\
8450 & 0.92 & 1.05 & 0.38 \\
\hline
\end{tabular}
\caption{Error metrics for $M_W=4, M_R=5$.}
\label{tab:NMTE_parametric_2}
\end{minipage}
\end{table}

\begin{figure}[htbp]
    \centering
    \begin{minipage}[t]{0.62\textwidth}
        \centering
        \begin{subfigure}[t]{\linewidth}
            \centering
            \includegraphics[width=\linewidth]{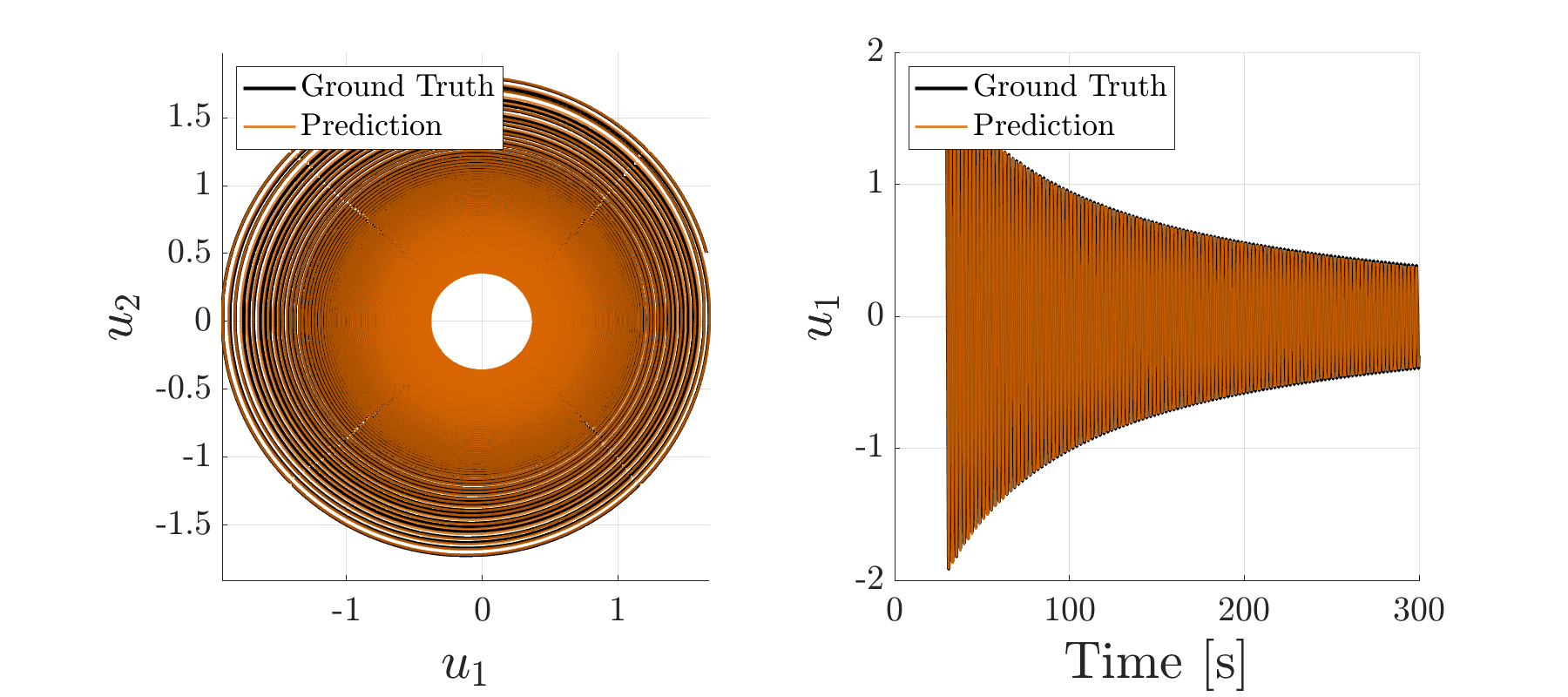}
            \caption{Reduced dynamics at $\mu = 7925$}
            \label{fig:reduced7925}
        \end{subfigure}
        \vspace{0.3em}
        \begin{subfigure}[t]{\linewidth}
            \centering
            \includegraphics[width=\linewidth]{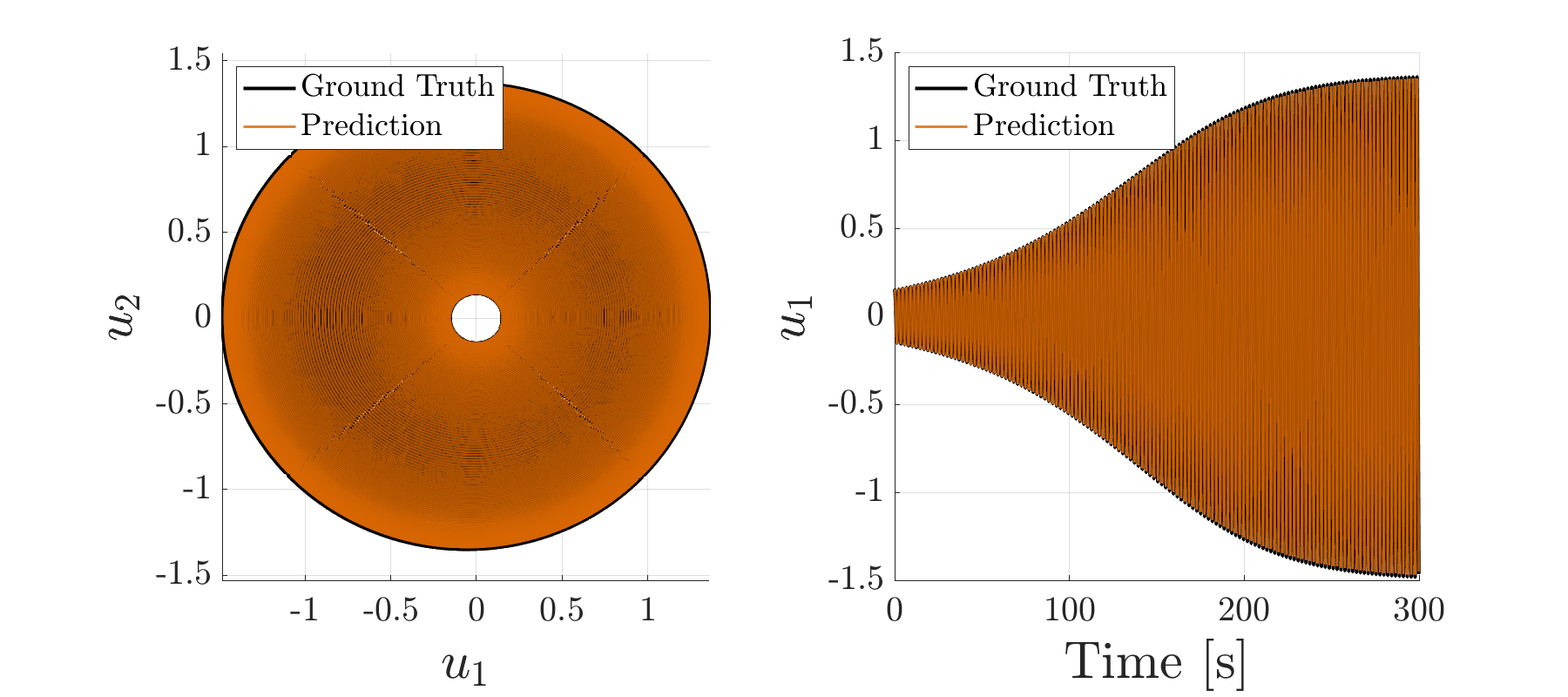}
            \caption{Reduced dynamics at $\mu = 8450$}
            \label{fig:reduced8450}
        \end{subfigure}
    \end{minipage}
    \hfill
    \begin{minipage}[c]{0.37\textwidth}
        \centering
        \vspace*{4em} 
        \begin{subfigure}[t]{\linewidth}
            \centering
            \includegraphics[width=\linewidth]{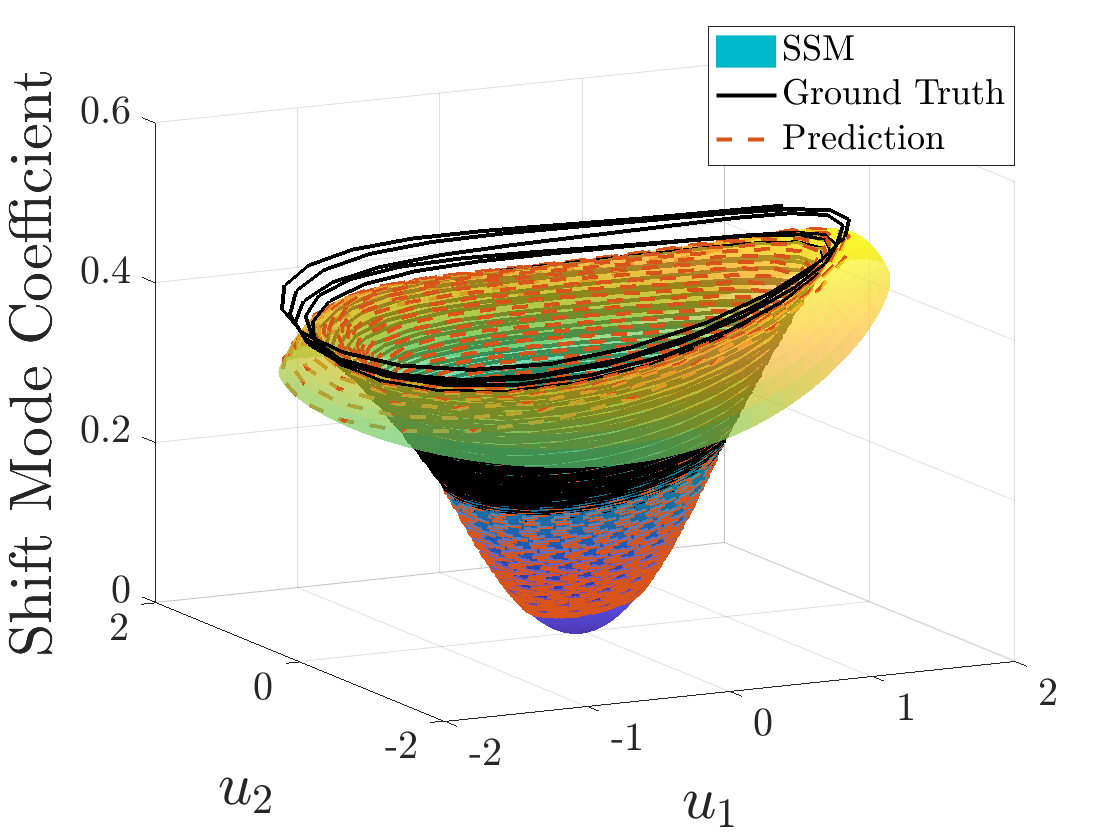}
            \caption{SSM at $\mu = 7925$}
            \label{fig:ssm7925}
        \end{subfigure}
        \vspace{0.5em}
        \begin{subfigure}[t]{\linewidth}
            \centering
            \includegraphics[width=\linewidth]{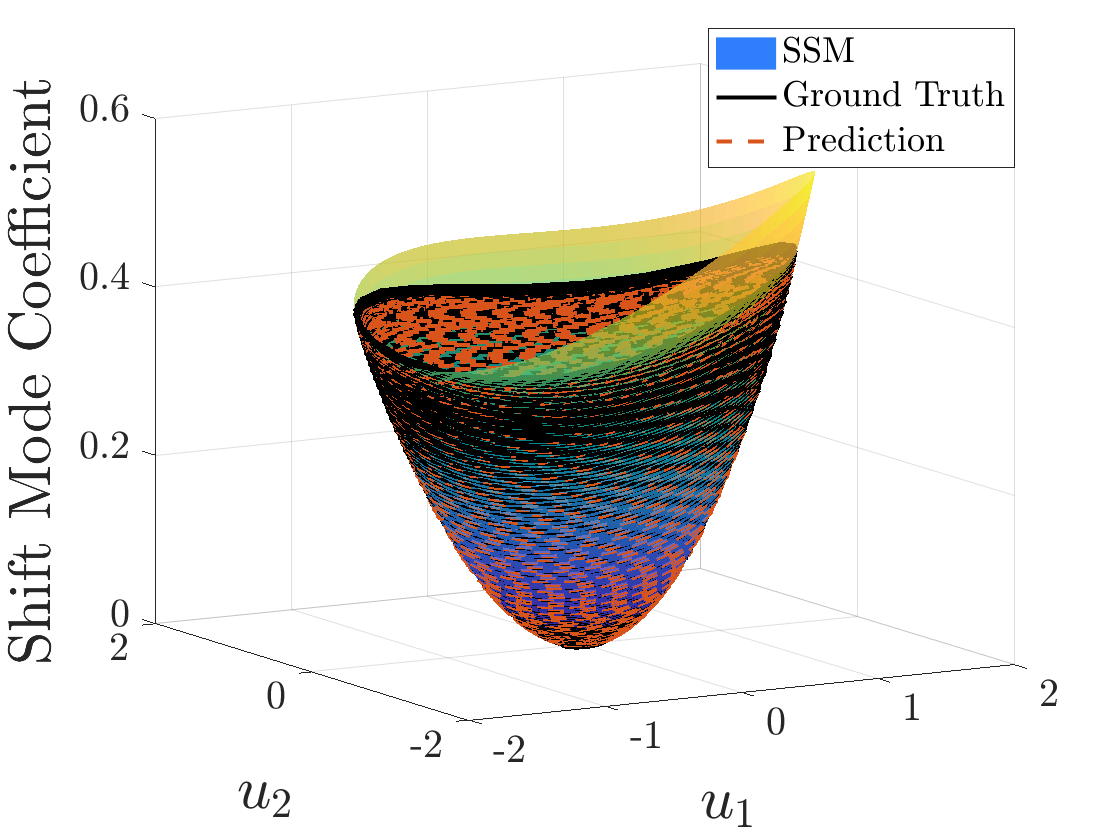}
            \caption{SSM at $\mu = 8450$}
            \label{fig:ssm8450}
        \end{subfigure}
    \end{minipage}
    \caption{Comparison of the SSM-reduced dynamics and SSMs at different previously unseen Reynolds numbers. The shift mode was introduced in \authcite{noackHierarchyLowdimensionalModels2003}}
    \label{fig:reduced_dynamics_layout}
\end{figure}

\begin{figure}[htbp]
    \centering
    \begin{subfigure}{0.4\textwidth}
        \centering
        \includegraphics[width=\linewidth]{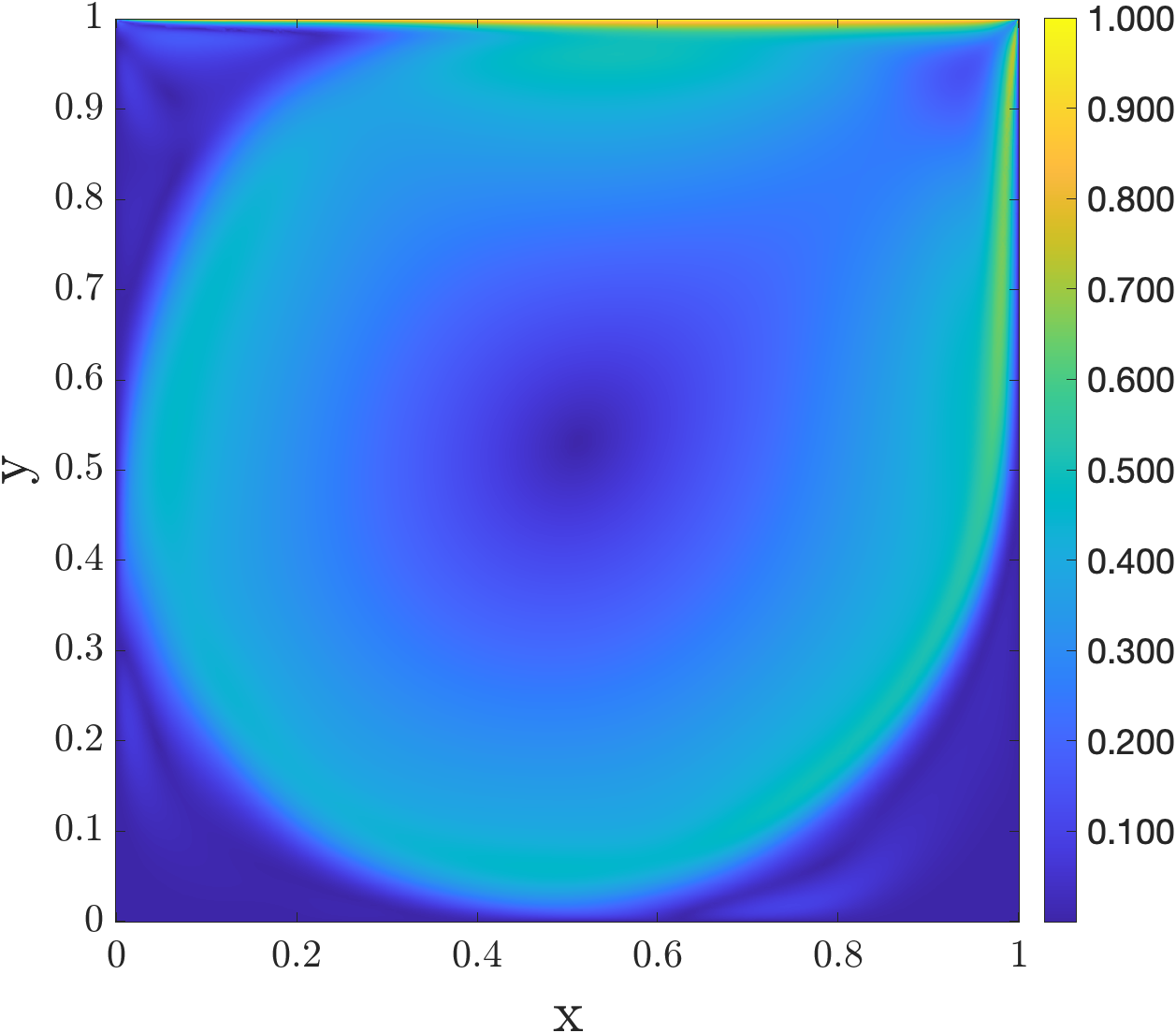}
        \caption{Full prediction.}
        \label{fig:fig1}
    \end{subfigure}\hfill
    \begin{subfigure}{0.4\textwidth}
        \centering
        \includegraphics[width=\linewidth]{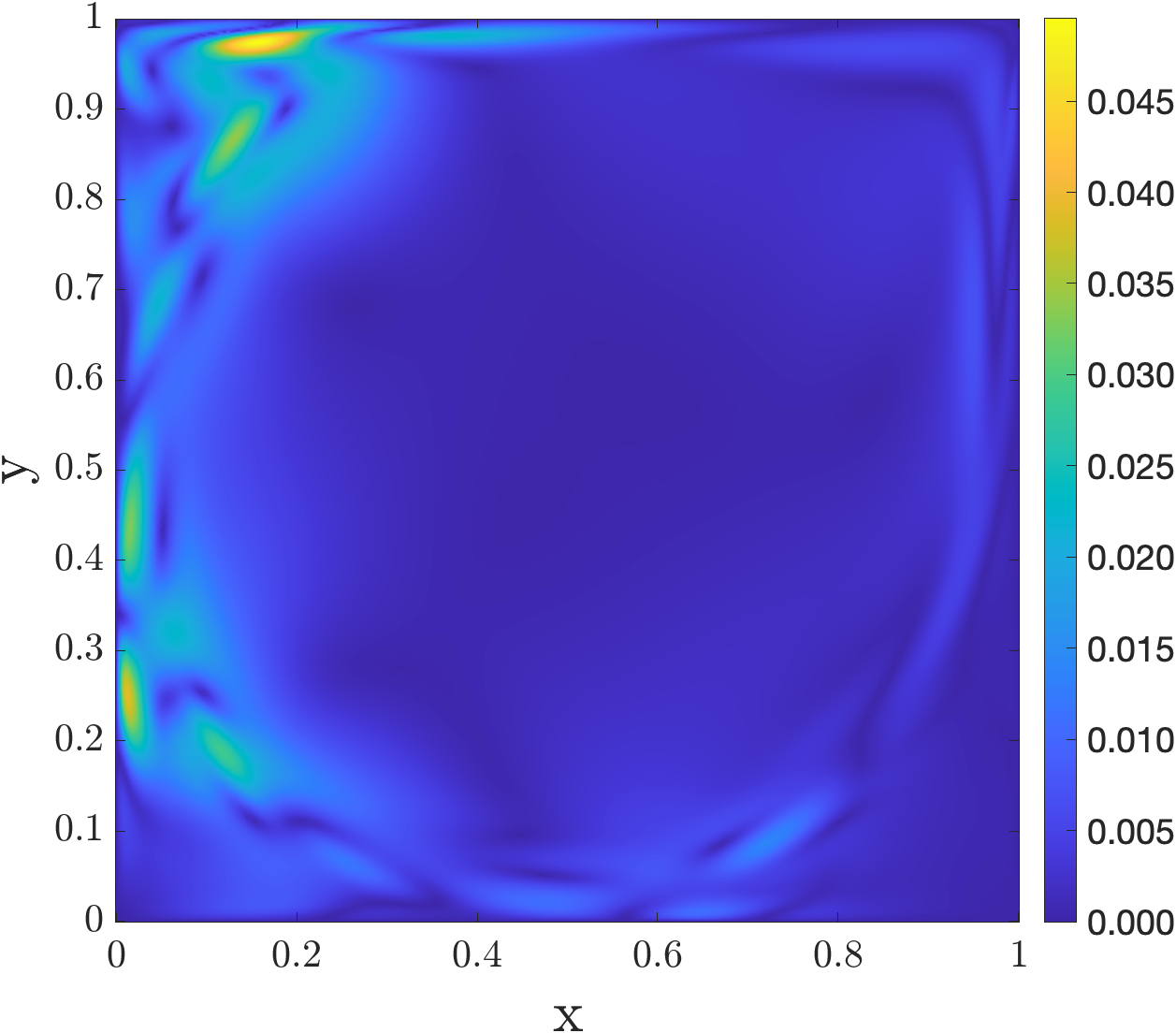}
        \caption{Predicted perturbation from fixed point.}
        \label{fig:fig2}
    \end{subfigure}
    \caption{Parametric SSM predictions for the lid-driven cavity flow at previously unseen Reynolds number.
    }
    \label{fig:comparison}
\end{figure}

\clearpage
\section{Discussion and Conclusion}

We have shown how SSM-based model reduction of a high-dimensional dynamical system around a fixed point undergoing a Hopf bifurcation can be justified by factoring in the location of certain resonances in the spectrum of the linear part. Locating these critical resonances, we showed how past approaches have suffered from locality issues in the pre-bifurcation parameter regime. As the order of these resonances increases closer to criticality, we have proved that the low-order Taylor coefficients in the SSM expansion and SSM-reduced dynamics persist on a larger parameter regime than the high-order coefficients. These findings generalize to any type of bifurcation of the fixed point, assuming the presence of at least one real negative eigenvalue.

We illustrated these results on a low-dimensional example in Hopf normal form, where analytic calculations showed the mechanism with which the loss of regularity at resonance appears. We also used a data-driven, parametric SSM approach to reduce lid-driven cavity flow to a 2D parametric SSM-model. This model accurately reproduced the full flow field at a large range of Reynolds numbers including the full transition from stable to periodic dynamics, while making a very precise prediction of the critical Reynolds number.

We believe that this work offers a rigorous guideline for modeling local bifurcations in a maximally extended parameter range using SSMs. Future work may target parametric SSM-models that capture other bifurcations, potentially of higher codimension, as the persistence results presented here can be directly generalized to those cases. Furthermore, our data-driven approach has the potential to capture and predict bifurcation phenomena from experimental data, a highly promising avenue for future research.

\section{Acknowledgments}
We are grateful to Giacomo Abbasciano for making his SSM interpolation code available.

\section{Author declarations}
The authors have no conflicts to disclose.

\appendix

\section{Appendix}

\subsection{Discretization}
\label{sec:discretization_navier_stokes}

After spatial discretization with finite differences, finite elements, finite volumes, or spectral methods, the discretized velocity and pressure fields are represented by vectors $\uu\in \R^{N}, \pp\in \R^{N_p}$.
Following \authcite{ascherComputerMethodsOrdinary1998}, the system \eqref{eq:Navier_Stokes} can be written as an index 2 differential algebraic equation (DAE)

\begin{align}
    \MM \dot{\uu} &= -\nu \KK \uu - \NN:(\uu \otimes \uu) - \CC \pp, \nonumber \\
    \CC^T \uu &= 0 \nonumber, \\
    \uu |_{\partial \Omega} &= \bb
    \label{eq:DAE}
\end{align}
where the matrices $\MM, \KK\in \R^{N\times N}, \CC\in \R^{N\times N_p}$, and tensor $\NN\in \R^{N\times N \times N}$ denote the mass matrix and the discretized diffusion, gradient and convection operators respectively. The vector $\bb \in \R^{N_b}$ contains the boundary condition.

To find the family of fixed points that changes stability type as the Reynolds number is increased, we solve for the fixed points of \eqref{eq:DAE} at low Reynolds numbers using a Picard iteration algorithm. This can then be used as an initial guess for a Newton method. This is faster to converge but on a smaller domain than the Picard iteration. The fixed point found in this fashion can then be used as an initial guess in the Picard iteration algorithm for a higher Reynolds number.

First, we expand the system around such a fixed point $(\uu_0,\pp_0)$, i.e. a time-independent solution to \eqref{eq:DAE}.
Letting $\ww=\uu-\uu_0$ and $\qq= \pp-\pp_0$, we can write \eqref{eq:DAE} as
\begin{align}
    \MM \dot{\ww} &= -\nu \KK\ww-\NN:(\ww \otimes \uu_0) - \NN:(\uu_0 \otimes \ww) - \NN:(\ww \otimes \ww) - \CC\qq, \nonumber \\
    \CC^T \ww &= 0, \nonumber \\
    \ww |_{\partial \Omega} &= 0,
    \label{eq:DAE_around_U0}
\end{align}
which we now view as a DAE on the interior points.

To study the stability of $(\uu_0, \pp_0)$, we note that the linear part of \eqref{eq:DAE_around_U0} can be written as
\begin{align}
    \EE \dot{\yy} = \AA \yy,
    \label{eq:linear_DAE}
\end{align}
where
\begin{align*}
    \yy = \begin{bmatrix}
        \ww \\
        \qq
    \end{bmatrix}, \qquad 
    \EE = \begin{bmatrix}
        \MM & 0 \\
        0 & 0
    \end{bmatrix}, \qquad 
    \AA = \begin{bmatrix}
        -\nu \KK\ww-\NN:(\uu_0 \otimes \cdot) - \NN:(\cdot \otimes \uu_0)& \CC \\
        \CC^T & 0
    \end{bmatrix}.
\end{align*}
The linear DAE \eqref{eq:linear_DAE} gives rise to generalized eigenproblems of the form $\AA \vv = \lambda \EE \vv$, which can be solved efficiently using a Krylov-Schur algorithm (\authcite{stewartKrylovSchurAlgorithmLarge2002}).

Next, we remove the pressure from the momentum equation using a Schur complement approach (\authcite{simoMixedHybridFinite1993}, \authcite{elmanFiniteElementsFast2014}). For this we define
\begin{align*}
    \RR(\ww;\nu) &= -\nu \KK\ww-\NN:(\uu_0 \otimes \ww) - \NN:(\ww \otimes \uu_0) - \NN:(\ww \otimes \ww),
\end{align*}
and differentiate the discretized continuity equation to obtain
\begin{align}
    \begin{bmatrix}
           \MM & \CC \\
           \CC^T & 0
    \end{bmatrix}
    \begin{bmatrix}
           \dot{\ww} \\
           \qq
    \end{bmatrix}
    = 
    \begin{bmatrix}
           \RR(\ww;\nu) \\
           0
    \end{bmatrix}.
    \label{eq:matrix_equation_ns}
\end{align}

Using the Schur complement $\SS= -\CC^T \MM^{-1} \CC$ of the block matrix above and assuming its invertibility\footnote{This depends on the spatial discretization. For our P2-P1 Taylor-Hood finite elements, this is true (see \authcite{simoMixedHybridFinite1993}).}, the matrix equation \eqref{eq:matrix_equation_ns} can be inverted to yield
\begin{align*}
    \begin{bmatrix}
           \dot{\ww} \\
           \qq
    \end{bmatrix}
    = 
    \begin{bmatrix}
           \MM^{-1} + \MM^{-1}\CC \SS^{-1}\CC^T \MM^{-1} & \MM^{-1}\CC \SS^{-1} \\
           -\SS^{-1}\CC^T \MM^{-1} & \SS^{-1}
    \end{bmatrix}
    \begin{bmatrix}
           \RR(\ww; \nu) \\
           0
    \end{bmatrix}.
\end{align*}
The decoupled momentum equation can now be written as 
\begin{align}
    \MM \dot{\ww} = \PP \RR(\ww;\nu),
    \label{eq:ODE_w}
\end{align}
where the projection $\PP= I - \CC[\CC^T \MM^{-1} \CC]^{-1}\CC^T \MM^{-1}$ can be seen as a disctretization of the Leray projection.

As a consequence, the evolution of the velocity perturbations $\ww$ in the discretized Navier-Stokes equations around the fixed point \eqref{eq:DAE_around_U0} can be viewed as a trajectory of a finite-dimensional parametric dynamical system
\begin{align*}
    \dot{\xx} = \ff(\xx;\mu), \ \xx\in \R^{N}, \ \ff\in C^{\infty}(\R^{N}\times \R^+,\R^{N}), \ \mu \in \R^+,
\end{align*}
where we introduced $\mu = 1/\nu$. Because \eqref{eq:DAE_around_U0} describes the flow relative to $\uu_0(\mu)$, we have $\ff(0;\mu)=0$ for each $\mu \in \R^+$.

\subsection{Proof of Lemma \ref{lem:failure_uniform_nonresonance}}
\label{sec:proof_lemma_failure_uniform_nonresonance}
Suppose there exists an open neighborhood $V_0$ of $\mu_0$ on which the nonresonance conditions \eqref{eq:nonresonance_conditions_3D} hold uniformly. For an arbitrary but fixed $\mu^*\in V_0$,
we define
\begin{align*}
    m^* =\frac{1}{2}\floor{\frac{\nu(\mu^*)}{\alpha(\mu^*)}}.
\end{align*}
Then, the strict inequality 
\begin{align*}
    (2m^*+1)\alpha(\mu^*)<\nu(\mu^*)<2m^*\alpha(\mu^*)
\end{align*}
holds as we have no resonance at $\mu^*$ by assumption. Moreover, as we assumed the nonresonance conditions to hold uniformly on $V_0$ and because 
$\alpha, \nu \in C^0$, the strict inequality 
\begin{align*}
    (2m^*+1)\alpha(\mu)<\nu(\mu)<2m^*\alpha(\mu),
\end{align*}
has to hold for all $\mu \in V_0$ by continuity. However, this contradicts $\alpha(\mu)\xrightarrow{\mu\to \mu_0}0$ and $\nu(\mu)\xrightarrow{\mu\to \mu_0} \nu_0 <0$.
\qed

\subsection{Proof of Lemma \ref{lem:resonance_locations}}
\label{sec:proof_lemma_resonance_locations}

By assumption, we can write Taylor expansions for $\mu < \mu_0$ as
\begin{align*}
    \alpha(\mu) &= a(\mu_0 - \mu)^p + r_{\alpha}(\mu)(\mu_0 - \mu)^p, &\norm{r_\alpha}_{L^\infty(\mu_0-\delta,\mu_0)}
\xrightarrow{\;\delta\to0^+\;}0, \\
    \nu(\mu) &= \nu_0 + b(\mu_0 - \mu) + r_{\nu}(\mu)(\mu_0 - \mu), &\norm{r_\nu}_{L^\infty(\mu_0-\delta,\mu_0)}
\xrightarrow{\;\delta\to0^+\;}0.
\end{align*}
with $p, a,b,\nu_0 \in \R$, $p\geq 1$ and $a,\nu_0 < 0$.
Because $m(\lambda_1(\mu) + \lambda_2(\mu)) = 2m\alpha(\mu)\in \R$, the nonresonance conditions \eqref{eq:nonresonance_conditions_3D} will fail at the zeros of the function 
\begin{align*}
    \Tilde{H}(\mu,m) &= \nu(\mu) - 2m\alpha(\mu) \\
    &= \nu_0 - 2ma(\mu_0 - \mu)^p + b(\mu_0 - \mu) + 2mr_{\alpha}(\mu)(\mu_0 - \mu)^p + r_{\nu}(\mu)(\mu_0 - \mu).
\end{align*}
We rescale $m$ using 
\begin{align}
    s= m^{1/p}(\mu_0 - \mu),
    \label{eq:rescaling}
\end{align}
and define $\epsilon = m^{-1/p}$ to obtain
\begin{align*}
    H(s,\epsilon) = \nu_0 - 2as^p + \epsilon bs - 2r_{\alpha}(\mu_0-\epsilon s)s^p + \epsilon r_{\nu}(\mu_0-\epsilon s)s.
\end{align*}
Because of the uniformity of the remainder terms $r_{\alpha}, r_{\nu}$, we have
\begin{align*}
    \lim_{\epsilon \rightarrow 0^+}H(s,\epsilon) = H(s,0) = \nu_0 - 2as^p,
\end{align*}
uniformly on compact sets of $s$.
We calculate the root
\begin{align}
    H(s^*,0) &= 0, \qquad s^* = \left(\frac{\nu_0}{2a}\right)^{1/p}.
    \label{eq:uniform_convergence_H}
\end{align}
Let $I=[s^*-\delta, s^* + \delta]$ with $\delta$ small enough such that 
\begin{align}
    \partial_s H(s,0) &= -2pa s^{p-1}\neq 0, \ \forall s \in I.
    \label{eq:monotonicity_H0}
\end{align}
If we assume the remainder terms to be $C^1$ (which is always the case if the system is smooth enough), then Lemma \ref{lem:resonance_locations} follows from the implicit function theorem. In that case, the roots are also unique. (Without this assumption we have to argue slightly differently and lose uniqueness).
Because of the monotonicity implied by \eqref{eq:monotonicity_H0}, $\forall s_1, s_2 \in I, s_1 < s^* < s_2$, we have
\begin{align*}
    H(s_1,0)H(s_2,0) < 0.
\end{align*}
Because \eqref{eq:uniform_convergence_H} converges uniformly, for any such $s_1, s_2$, there exists an $\epsilon>0$ small enough such that
\begin{align*}
    H(s_1,\epsilon)H(s_2,\epsilon) \leq 
    H(s_1,0)H(s_2,0) + C(\epsilon)\left(\abs{H(s_1,0)} + \abs{H(s_2,0)}\right) + C(\epsilon)^2 <
    0,
\end{align*}
where 
\begin{align*}
    C(\epsilon) = \norm{H(\cdot \ ,\epsilon)-H(\cdot \ ,0)}_{L^\infty(I)}.
\end{align*}
By the intermediate value theorem, fixing $s_1, s_2$ yields the existence of an $s_\epsilon \in (s_1, s_2)$ for which $H(s_\epsilon,\epsilon)=0$ for $\epsilon>0$ small enough.
Because $I$ is compact, the minimum 
\begin{align*}
    M = \min_{s\in I}\abs{\partial_s H(s ,0)}
\end{align*} 
is attained. By the mean value theorem, we also have
\begin{align*}
    \abs{s_\epsilon - s^*} \leq \frac{1}{M} \abs{H(s_\epsilon,0)-H(s^*,0)} = \frac{1}{M} \abs{H(s_\epsilon,0)}
    = \frac{1}{M} \abs{H(s_\epsilon,0)-H(s_\epsilon,\epsilon)} \leq \frac{C(\epsilon)}{M}.
\end{align*}
Hence, any $s_\epsilon$ is $o(1)$-close to $s^*$ as $\epsilon \rightarrow 0$.
Inverting the rescaling \eqref{eq:rescaling} gives us the root locations 
\begin{align*}
    \mu_{2m} = \mu_0 - \left(\frac{\nu_0}{2am}\right)^{1/p} + o(m^{-1/p}).
\end{align*}
\qed

\subsection{Proof of Lemma \ref{lem:low_order_ssm_coeffs}}
\label{sec:proof_lemma_low_order_ssm_coeffs}
The statement of the Lemma follows directly from the proof of Theorem 1.1 in \authcite{cabreParameterizationMethodInvariant2003a}. 
\qed

The following discusses a special case in order to give intuition for the role of resonances and the recursive computation of SSM coefficients. 
A non-parametric version of this can be found in \authcite{hallerModelingNonlinearDynamics2025}, and a guide on how the general computations can be performed in practice can be found in \authcite{jainHowComputeInvariant2022}.

We write out system \eqref{eq:parametric_dynamical_system} as 
\begin{align*}
    \dot{\xx} = \AA(\mu)\xx + \ff_0(\xx;\mu), \qquad \AA(\mu)=D_{\xx}\ff(\xx_0(\mu),\mu), \qquad \ff_0(\xx;\mu)=\LandO(\norm{\xx}^2).
\end{align*}
For ease of exposition, we will assume $\AA(\mu)$ to be semisimple with disjoint eigenvalues for all $\mu$. This implies the existence of an eigenbasis that is $C^{r}$ in $\mu$ (\authcite{sibuyaGlobalPropertiesMatrices1965}). (For the more general case, see Remark \ref{rem:non_semisimple_case}).
Let $\SS_{\uu}(\mu)\in \R^{N\times 2}$ contain the real eigenvectors of $\AA(\mu)$ associated to $\lambda_{1,2}(\mu)$ and $\SS_{\vv}(\mu)\in \R^{N\times N-2}$ contain the real eigenvectors associated to $\lambda_{3}(\mu), \dots \lambda_{N}(\mu)$.

Aligning the coordinate system with this basis yields
\begin{align}
    \dot{\uu} &= \AA_{\uu}(\mu)\uu + \ff_{\uu}(\uu,\vv;\mu), \label{eq:dynamics_u}\\
    \dot{\vv} &= \AA_{\vv}(\mu)\vv + \ff_{\vv}(\uu,\vv;\mu) \nonumber ,
\end{align}
where 
\begin{alignat}{2}
\xx &= \SS_{\uu}(\mu)\uu + \SS_{\vv}(\mu)\vv,
&\qquad \ff_{\uu}, \ff_{\vv} &= \mathcal{O}(\|(\uu,\vv)\|^2),
\label{eq:decomposition_eigenvectors}\\
\AA(\mu)\SS_{\uu}(\mu) &= \SS_{\uu}(\mu)\AA_{\uu}(\mu),
&\qquad \AA_{\uu}(\mu) &= \operatorname{diag}(\lambda_1(\mu), \lambda_2(\mu)), \nonumber\\
\AA(\mu)\SS_{\vv}(\mu) &= \SS_{\vv}(\mu)\AA_{\vv}(\mu),
&\qquad \AA_{\vv}(\mu) &= \operatorname{diag}(\lambda_3(\mu), \dots, \lambda_N(\mu)).
\nonumber
\end{alignat}

Assuming that the nonresonance conditions \eqref{eq:nonresonance_conditions} hold, a 2D slow SSM can be written locally as
\begin{align}
    \vv = \hh(\uu;\mu) = \sum_{2\leq\abs{\kk}\leq 2m+1} \hh^{\kk}(\mu) \uu^{\kk} + o(\norm{\uu}^{2m+1}),
    \label{eq:expansion_h}
\end{align}
where $\kk = (k_1,k_2)$ is a multi-index, $|\kk| = k_1 + k_2$, and $\uu^\kk = u_1^{k_1} u_2^{k_2}$. 
Because of the invariance of SSMs, we can differentiate this relationship along trajectories to obtain the parametric invariance equation
\begin{align}
    \AA_{\vv}(\mu)\hh(\uu;\mu) + \ff_{\vv}(\uu, \hh(\uu;\mu);\mu) = D_{\uu} \hh(\uu;\mu)[\AA_{\uu}(\mu)\uu + \ff_{\uu}(\uu,\hh(\uu;\mu);\mu)].
    \label{eq:parametric_invariance_eq}
\end{align}
By inserting \eqref{eq:expansion_h} into \eqref{eq:parametric_invariance_eq} and equating powers, we obtain a set of cohomological equations
\begin{align}
    \LL_{\kk}(\mu) \hh^{\kk}(\mu) = \NN^{\kk}(\{\hh^\jj(\mu)\}_{|\jj|<|\kk|};\mu),
    \label{eq:cohomological_equation}
\end{align}
\begin{align*}
    \LL_{\kk}(\mu) = \text{diag}\Big(\lambda_3(\mu)-(k_1\lambda_1(\mu)+k_2\lambda_2(\mu)),\dots, \lambda_N(\mu)-(k_1\lambda_1(\mu)+k_2\lambda_2(\mu))\Big).
\end{align*}
where $\NN^{\kk}(\cdot, \mu)$ is a nonlinear function of the coefficients $\hh^\jj(\mu)$ of order less than $|\kk|$. The inhomogeneous linear system \eqref{eq:cohomological_equation} becomes recursively solvable for increasing multiindices. For each $\abs{\kk}$, all cohomological operators $\LL_{\kk}(\mu)$ are invertible if and only if the nonresonance conditions at order $\abs{\kk}$ hold. Due to our assumptions on the resonances appearing in $\Tilde{V}_{2m}$, this is the case for all $\mu \in \Tilde{V}_{2m}$ as long as $\abs{\kk} \leq 2m+1$.
This means that the coefficients $\hh^{\kk}(\mu)$ of order $\abs{\kk}\leq 2m+1$ can be uniquely obtained for every $\mu\in \Tilde{V}_{2m}$.  Since $\LL_\kk(\mu)$ depends $C^{r}$ smoothly on $\mu$ and the inhomogeneity includes only the coefficients $\hh^\jj(\mu)$ of order less than $|\kk|$, it holds that $\hh^{\kk}(\mu)$ depends $C^{r}$ smoothly on $\mu$ via induction over $|\kk|$.

By inserting \eqref{eq:expansion_h} into \eqref{eq:decomposition_eigenvectors} we get a parametrization of the SSM as a graph over the eigenspace $\SS_{\uu}(\mu)$
\begin{align*}
    \xx = \WW(\uu;\mu) = \SS_{\uu}(\mu)\uu + \sum_{2\leq\abs{\kk}\leq 2m+1} \WW^{\kk}(\mu) \uu^{\kk} + o(\norm{\uu}^{2m+1}),
\end{align*}
where $\WW^{\kk}(\mu) = \SS_{\vv}(\mu) \hh^{\kk}(\mu)$. The persistence and smoothness of $\hh^{\kk}(\mu)$ transfers to $\WW^{\kk}(\mu)$.
\begin{remark}
\label{rem:non_semisimple_case}
The above argument can be extended to the general (non-semisimple) case by using the $C^r$ block-diagonalization, that is guaranteed if the eigenvalues remain disjoint (\authcite{sibuyaGlobalPropertiesMatrices1965}).
This leads to a form equivalent to \eqref{eq:decomposition_eigenvectors}, where $\AA_{\uu}(\mu)$ and $\AA_{\vv}(\mu)$ are no longer diagonal, but retain the same spectrum.

Solving the invariance equation order by order now leads to a coupled system of cohomological equations of the form
\begin{align}
    \LL_{|\kk|}(\mu) \{\hh^\ii(\mu)\}_{|\ii|=|\kk|} = \NN^{|\kk|}(\{\hh^\jj(\mu)\}_{|\jj|<|\kk|};\mu),
    \label{eq:cohomological_equation_general}
\end{align}
at each order $|\kk|$.
The operator $\LL_{|\kk|}(\mu)$ is invertible if and only if all nonresonance conditions at order $|\kk|$ hold. 
The even more general case in which the eigenvalues do not have to be disjoint is discussed in \authcite{cabreParameterizationMethodInvariant2003a} using spectral projections. The cohomological equations then appear as inhomogeneous linear equations involving linear operators between Banach spaces, the spectra of which contain precisely the nonresonance conditions at order $\abs{\kk}$.
\end{remark}

\subsection{Proof of Lemma \ref{lem:low_order_red_dyn_coeffs}}
\label{sec:proof_lemma_low_order_red_dym_coeffs}
By inserting \eqref{eq:expansion_h} into \eqref{eq:dynamics_u}, we can write the dynamics reduced to the primary SSM $W(E;\mu)$ as
\begin{align}
    \dot{\uu} &= \AA_{\uu}(\mu)\uu + \ff_{\uu}(\uu,\hh(\uu;\mu);\mu).
    \label{eq:reduced_dynamics_SSM}
\end{align}
We split $\hh(\uu;\mu)$ as
\begin{align*}
    \hh(\uu;\mu) &= \Tilde{\hh}(\uu;\mu) + \GammaGamma(\uu; \mu), \\
    \Tilde{\hh}(\uu;\mu)&= \sum_{2\leq\abs{\kk}\leq 2m+1} \hh^{\kk}(\mu) \uu^{\kk}, \qquad  \GammaGamma(\uu; \mu) = o(\norm{\uu}^{2m+1}),
\end{align*}
therefore \eqref{eq:reduced_dynamics_SSM} can be written as
\begin{align*}
    \dot{\uu} &= \AA_{\uu}(\mu)\uu + \ff_{\uu}(\uu,\Tilde{\hh}(\uu;\mu);\mu) + \E (\uu;\mu), \\
    \E (\uu;\mu) &= \ff_{\uu}(\uu,\Tilde{\hh}(\uu;\mu)+\GammaGamma(\uu; \mu);\mu) -\ff_{\uu}(\uu,\Tilde{\hh}(\uu;\mu);\mu).
\end{align*}
By the mean-value inequality, we have
\begin{align}
    \norm{\E (\uu;\mu)} \leq \sup_{s \in [0,1]}\norm{D_{\vv}\ff_{\uu}(\uu ,\Tilde{\hh}(\uu ;\mu)+s\GammaGamma(\uu; \mu);\mu)} \norm{\GammaGamma(\uu; \mu)}.
    \label{eq:mean_value_inequality}
\end{align}
Because $\ff_{\uu}$ is at least quadratic, \eqref{eq:mean_value_inequality} yields 
\begin{align*}
    \E (\uu;\mu) = o(\norm{\uu}^{2m+2}).
\end{align*}
This means that the Taylor coefficients of the reduced dynamics up to order $2m+2$ persist $C^{r}$ smoothly in $\mu$ across a resonance of order $r_0>2m$.
\qed

\subsection{Error Metrics}
\label{sec:NMTE}
As an error metric, Section \ref{sec:lid_driven_cavity} uses the normalized mean trajectory error (NMTE), which has frequently been employed to assess the predictive quality of ROMs (\authcite{cenedeseDatadrivenModelingPrediction2022}, \authcite{hallerModelingNonlinearDynamics2025}).

Given the full, high-dimensional dataset $\mathcal{D} = \{\xx(t_1), \dots, \xx(t_m)\}$ centered around the fixed point, and the ROM prediction $\Tilde{\mathcal{D}} =\{\Tilde{\xx}(t_1), \dots, \Tilde{\xx}(t_m)\}$, we define
\begin{align*}
    \text{NMTE}(\mathcal{D},\Tilde{\mathcal{D}})= \frac{1}{m \max_j\norm{\xx(t_j)}}\sum_{i=1}^m \norm{\xx(t_i) - \Tilde{\xx}(t_i)}.
\end{align*}
For problems with limit cycles, this error metric can often be too pessimistic as it is phase-sensitive. For this reason, we also report the
the normalized mean amplitude error (NMAE)
\begin{align*}
    \text{NMAE}(\mathcal{D},\Tilde{\mathcal{D}}) = \frac{1}{m \, \frac{1}{m} \sum_{i=1}^m \norm{\xx(t_i)}} \sum_{i=1}^m \left| \norm{\xx(t_i)} - \norm{\Tilde{\xx}(t_i)} \right|,
\end{align*}
which quantifies the average relative difference in trajectory amplitude and is less phase-sensitive.
To asses the quality of the SSM itself without its reduced dynamics, we define the Reconstruction Error (RecError)
\begin{align*}
    \text{RecError} = \text{NMTE}(\mathcal{D},\Hat{\mathcal{D}}),
\end{align*}
where $\Hat{\mathcal{D}} =\{\Hat{\xx}(t_1), \dots, \Hat{\xx}(t_m)\}$, $\hat{\xx}(t_i)=\WW(\VV(\mu)^T\xx(t_i);\mu)$, i.e. the NMTE between the full data and the projected-then-reconstructed data.

\printbibliography
\end{document}